\theoremstyle{plain}
\newtheorem{thm}{Theorem}
\newtheorem{lem}[thm]{Lemma}
\newtheorem{prop}[thm]{Proposition}
\newtheorem{cor}[thm]{Corollary}
\theoremstyle{definition}
\theoremstyle{remark}
\newtheorem*{rem*}{Remark}
\newtheorem{rem}{Remark}
\newtheorem*{ack}{Acknowledgement}
\newcommand{\Rd}{{\mathbb{R}^d}}
\newcommand{\N}{{\mathbb{N}}}
\newcommand{\R}{\mathbb{R}}
\newcommand{\EEE}{\mathcal{E}}
\DeclareMathOperator{\WLSC}{WLSC}
\DeclareMathOperator{\WUSC}{WUSC}
\newcommand{\lc}{{\underline{c}}}
\newcommand{\uc}{{\overline{c}}}
\newcommand{\la}{{\underline{\alpha}}}
\newcommand{\ua}{{\overline{\alpha}}}
\newcommand{\lC}{{\underline{C}}}
\newcommand{\uC}{{\overline{C}}}
\newcommand{\lA}{{\underline{A}}}
\newcommand{\uA}{{\overline{A}}}
\definecolor{kb}{rgb}{0,0.6,0}
\definecolor{pk}{rgb}{0,0,0.6}
\definecolor{bd}{rgb} {0.6,0,0}
\date{\today}
\begin{document}
\title[Hardy inequalities for semigroups]{Hardy inequalities and non-explosion results for semigroups}

\author[K{.} Bogdan]{Krzysztof Bogdan}\address{Department of Mathematics, Wroc{\l}aw University of Technology,
Wybrze{\.z}e Wyspia{\'n}skiego 27,
50-370 Wroc{\l}aw, Poland}
\email{bogdan@pwr.edu.pl}
\author[B{.} Dyda]{Bart{\l}omiej Dyda}
\address{Department of Mathematics, Wroc{\l}aw University of Technology,
Wybrze{\.z}e Wyspia{\'n}skiego 27,
50-370 Wroc{\l}aw, Poland}
\email{Bartlomiej.Dyda@pwr.edu.pl}
\author[P{.} Kim]{Panki Kim}
\address{Department of Mathematics, Seoul National University,
Building 27, 1 Gwanak-ro, Gwanak-gu
Seoul 151-747, Republic of Korea}
\email{pkim@snu.ac.kr}

\subjclass[msc2010]{Primary: {31C25, 35B25}; Secondary: {31C05, 35B44}}

 %

\keywords{optimal Hardy equality, transition density, Schr\"odinger perturbation}

\thanks{Krzysztof Bogdan and Bart\l{}omiej Dyda were partially supported by NCN grant 2012/07/B/ST1/03356.}

\begin{abstract}
We prove non-explosion results for Schr\"odinger
perturbations of symmetric transition densities and 
Hardy inequalities for their quadratic forms
by using explicit supermedian functions of their semigroups.
\end{abstract}
\maketitle

\section{Introduction}\label{sec:int}
Hardy-type inequalities are important in harmonic analysis, potential theory, functional analysis, partial differential equations and probability. In PDEs they are used to obtain a priori estimates, existence and regularity results \cite{MR2777530} and to study qualitative properties and asymptotic behaviour of solutions \cite{MR1760280}. 
In functional and harmonic analysis they yield embedding theorems and interpolation theorems, e.g. Gagliardo--Nirenberg interpolation inequalities \cite{MR2852869}.
The connection of Hardy-type inequalities to the theory of superharmonic functions in analytic and probabilistic potential theory was studied, e.g., in 
\cite{MR856511}, \cite{MR1786080}, \cite{MR2663757}, \cite{Dyda12}.
A general rule stemming from the work of P.~Fitzsimmons \cite{MR1786080} may be summarized as follows: if $\mathcal{L}$ is the generator of a symmetric Dirichlet form $\EEE$ and $h$ is superharmonic, i.e. $h\ge 0$ and $\mathcal{L}h\le 0$, then
$\EEE(u,u)\ge \int u^2 (-\mathcal{L}h/h)$. 
The present paper 
gives an analogous connection
in the setting of symmetric transition densities.
When these are integrated against increasing weights in time and arbitrary weights in space, we obtain suitable (supermedian) functions $h$.
The resulting analogues $q$ of the Fitzsimmons' ratio $-\mathcal{L}h/h$ yield explicit Hardy inequalities which  in many cases are optimal.
The approach is very general and the resulting Hardy inequality is automatically valid on the whole of the ambient 
$L^2$ space.

We also prove non-explosion results for Schr\"odinger perturbations of the original transition densities by the ratio $q$, namely we verify that $h$ is supermedian, in particular integrable with respect to the perturbation.
For instance we recover the famous non-explosion result of Baras and Goldstein  for $\Delta+(d/2-1)^2|x|^{-2}$, cf. \cite{MR742415} and \cite{MR3020137}. 

The results are illustrated by applications to transition densities with certain scaling properties.
 
The structure of the paper is as follows.
In Theorem~\ref{thm:perturbation} of Section~\ref{sec:Sc} we prove the non-explosion result for Schr\"odinger perturbations. 
In Theorem~\ref{thm:Hardy} of Section~\ref{sec:Hi} we prove the Hardy inequality. In fact, under mild additional assumptions we have a Hardy {\it equality} with an explicit remainder term.
Sections~\ref{sec:A}, \ref{s:asjm} and \ref{sec:wls} present applications.
In Section~\ref{sec:A} we 
recover
 the classical Hardy equalities for the quadratic forms of the Laplacian and fractional Laplacian. For completeness we also recover the best constants and the corresponding remainder terms, as given by Filippas and Tertikas \cite{MR1918494}, Frank, Lieb and Seiringer \cite{MR2425175}
 and Frank and Seiringer 
\cite{MR2469027}.
In Section~\ref{s:asjm} we consider transition densities with weak global scaling in the setting of metric spaces. These include a class of transition densities on fractal sets 
(Theorem~\ref{t:hardyphi1} and Corollary~\ref{c:hardyphi1})
and transition densities of many unimodal L\'evy processes on $\Rd$ (Corollary~\ref{cor:hardyphi}).
We prove Hardy inequalities for their quadratic forms. 
In Section~\ref{sec:wls} we focus on transition densities 
with weak local scaling on $\Rd$. The corresponding Hardy inequality is stated in Theorem~\ref{cor:hardyphi2}.

The calculations in 
Sections~\ref{sec:A}, \ref{s:asjm} and \ref{sec:wls}, which
produce explicit weights in Hardy inequalities, also give non-explosion results for specific Schr\"odinger perturbations of the corresponding transition densities by means of
Theorem~\ref{thm:perturbation}.
These are stated in 
Corollary~\ref{cor:aSp} and \ref{cor:aSpL} and Remark~\ref{rem:ef}, \ref{rem:efL} and \ref{rem:efLl}.

Currently our methods are confined to the (bilinear) $L^2$ setting. We refer to \cite{MR2469027}, \cite{MR3117146} for other frameworks.
Regarding further development, it is of interest to find relevant applications with less space homogeneity and scaling than required in the examples presented below, extend the class of considered time weights, prove explosion results for ``supercritical'' Schr\"odinger perturbations, and understand more completely when equality holds in our Hardy inequalities.

Below we use ``$:=$" to indicate definitions, e.g. 
$a \wedge b := \min \{ a, b\}$ and $a \vee b := \max \{ a, b\}$. 
For two nonnegative 
functions $f$ and $g$
we write
$f\approx g$ if
there is a positive number $c$, called {\emph constant},
such that $c^{-1}\, g \leq f \leq c\, g$.
	Such comparisons are usually called 
	\emph{sharp estimates}.
We write $c=c(a,b,\ldots,z)$ to claim that $c$ may be so chosen to depend only on $a,b,\ldots,z$.
	For every function $f$, let
$f_+:=f \vee 0$.   For any open subset $D$ of the $d$-dimensional Euclidean space $\R^d$, we denote
by $C^\infty_c (D)$ the space of smooth functions with compact supports in $D$, 
and by $C_c (D)$ the space of continuous  functions with compact supports in $D$. 
In statements and proofs, $c_i$ denote constants whose exact values are unimportant.
These are given anew in each statement and each proof.

\begin{ack}We thank William Beckner for comments on the Hardy equality \eqref{eq:W12}.
We thank Tomasz Byczkowski, Tomasz Grzywny, Tomasz Jakubowski, Kamil Kaleta, Agnieszka Ka\l{}amajska and Dominika Pilarczyk  for comments, suggestions and encouragement. We also thank Rupert L.~Frank and Georgios Psaradakis for remarks on the literature related to Section~\ref{sec:A}.
\end{ack}

\section{
{Non-explosion for Schr\"odinger perturbations}}\label{sec:Sc}
Let $(X,
\mathcal{M}, m(dx))$ be a $\sigma$-finite measure space. Let $\mathcal{B}_{(0,\infty)}$ be the Borel $\sigma$-field on the halfline $(0,\infty)$.
Let $p:(
0,\infty) \times X\times X\to [0,\infty]$ be 
$\mathcal{B}_{(0,\infty)}\times \mathcal{M}\times \mathcal{M}$-measurable and symmetric:
\begin{equation}\label{eq:psymmetric}
p_t(x,y)=p_t(y,x)\,,\quad x,y\in X\,,\quad t>0\,,
\end{equation}
and let $p$ satisfy the Chapman--Kolmogorov equations:
\begin{equation}
  \label{eq:ck}
\int_X p_s(x,y)p_t(y,z)
m(dy)=p_{s+t}(x,z),\qquad 
 x,z\in X,\; s,t> 0,
\end{equation}
 and assume that for every $t>0, x \in X$, 
 $p_t(x,y)m(dy)$
 is ($\sigma$-finite) integral kernel.
Let $f:\R \to [0,\infty)$ be 
non-decreasing, and let
$f=0$ on $(-\infty, 0]$.
We have $f'\geq 0$ a.e., and
\begin{equation}
  \label{eq:fp}
f(a)+\int_a^b f'(s)ds \leq f(b),
\quad -\infty < a\leq b < \infty.
\end{equation}

{Further,} let $\mu$ be a~nonnegative $\sigma$-finite measure on $(X,\mathcal{M})$. We put
\begin{align} 
p_s\mu(x) &= \int_X p_s(x, y) \,\mu(dy), \label{eq:psmu}\\
h(x) &= \int_0^\infty f(s) p_s\mu(x) \,ds. \label{eq:h}
\end{align}
We denote, as usual, $p_th(x)=\int_X p_t(x,y)h(y){m(dy)}$.
By Fubini-Tonelli and Chapman-Kolmogorov, for $t>0$ and $x\in X$ we have 
\begin{align}
\label{eq:kb5.5}
  p_th(x)
  &=\int_t^\infty f(s-t) p_s\mu(x) \,ds\\
&\leq\int_t^\infty f(s) p_s\mu(x) \,ds 
 \nonumber \\
&\leq h(x).\label{eq:ph}
\end{align}
In this sense, $h$ is supermedian.

We define $q:X\to [0,\infty]$ as follows: $q(x)=0$ if $h(x)=0$ or $\infty$, else \begin{equation}\label{eq:qmu}
 q(x) = 
\frac{1}{h(x)} \int_0^\infty f'(s) p_s\mu(x) \,ds.
\end{equation}
For all $x\in X$ we thus have 
\begin{equation}\label{eq:kb8.5}
q(x)h(x)\leq \int_0^\infty f'(s) p_s\mu(x) \,ds.
\end{equation}
We define the Schr\"odinger perturbation of $p$ by $q$ \cite{MR2457489}:
\begin{equation}
  \label{eq:pti}
  \tilde{p}=\sum_{n=0}^\infty p^{(n)},
\end{equation}
where $p^{(0)}_t(x,y)  =  p_t(x,y)$, and
\begin{equation}
\label{eq:pn}
 p^{(n)}_t(x,y)  =  
\int_0^t\int_{X}
p_s(x,z)\,q(z)p^{(n-1)}_{t-s}(z,y)\,m(dz)\,ds,\quad n\geq 1.
\end{equation} 
It is well-known that $\tilde{p}$ is a transition 
density \cite{MR2457489}.
\begin{thm}\label{thm:perturbation}
We have $\int_X \tilde{p}_t(x,y)h(y){m}(dy)\leq h(x)$.
\end{thm}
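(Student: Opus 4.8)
The plan is to establish the bound by induction on $n$ for each term $p^{(n)}$ of the perturbation series \eqref{eq:pti}, showing that
\[
\int_X p^{(n)}_t(x,y)\,h(y)\,m(dy)\ \le\ \int_t^\infty \frac{(s-t)^n}{n!}\,f^{(n)}_{\mathrm{shift}}(s)\,p_s\mu(x)\,ds
\]
for a suitable family of ``shifted'' integrands, and then summing over $n$. More concretely, I would look for an estimate of the form $\int_X p^{(n)}_t(x,y)h(y)m(dy)\le \int_t^\infty g_n(s-t)\,p_s\mu(x)\,ds$, where $g_0=f$ and the $g_n$ are defined so that the recursion closes. The base case $n=0$ is exactly \eqref{eq:kb5.5}. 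For the inductive step one feeds the bound for $p^{(n-1)}$ into the defining formula \eqref{eq:pn}, uses Chapman--Kolmogorov \eqref{eq:ck} and Fubini--Tonelli to collapse the spatial integrals against $\mu$, uses the key pointwise inequality \eqref{eq:kb8.5} to replace $q(z)h(z)$ (this is where the numerator of $q$, i.e.\ $f'$, enters), and then one is left with a purely one-dimensional computation in the time variables.

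The heart of the matter is therefore the one-dimensional bookkeeping: after plugging \eqref{eq:kb8.5} into \eqref{eq:pn} and applying Chapman--Kolmogorov, the $z$-integral of $p_s(x,z)\big(\int_0^\infty f'(u)p_u\delta(z)\,du\big)p^{(n-1)}_{t-s}(\cdot)$ type expression should reassemble, via \eqref{eq:ck}, into $p_{s+u}\mu(x)$ integrated against a time kernel built from $f'$ and $g_{n-1}$. One then wants a clean inequality saying that convolving $g_{n-1}$ with $f'$ (in the appropriate sense dictated by the recursion) produces $g_n$, and that $\sum_n g_n \le f$ pointwise. I expect the natural choice to be essentially $g_n(r) = \frac{1}{n!}\big(\int_0^r f'\big)^{?}$ — more precisely the telescoping should exploit \eqref{eq:fp} in the form $f'(\text{something}) \,ds$ integrating up to differences of $f$, so that the sum of all the time kernels telescopes back to $f(s)\le$ the $n=0$ term. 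The cleanest route may be to first prove the finite partial sum bound $\sum_{n=0}^N \int_X p^{(n)}_t(x,y)h(y)m(dy)\le h(x)$ directly, by induction on $N$, rather than tracking each $g_n$ separately; the inductive hypothesis then already has the right shape and \eqref{eq:fp} is used exactly once per step to absorb the new term.

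The main obstacle I anticipate is purely organizational: getting the time-variable algebra to telescope correctly, i.e.\ choosing the substitution (likely $s\mapsto$ total elapsed time, with the inner integral running over the intermediate jump time) so that the factor $f'(s-t)\,ds$ or $f'(u)\,du$ produced at each stage combines with the tail integral $\int_{(\cdot)}^\infty f(\cdot)p_\cdot\mu\,d(\cdot)$ from the previous stage to yield, via \eqref{eq:fp}, a new tail integral of the same form with one more $p^{(n)}$ accounted for. There is also a minor measure-theoretic point — justifying all the Fubini--Tonelli interchanges, which is legitimate since every integrand is nonnegative and $p_s\mu$, $h$, $q$ are all $[0,\infty]$-valued and measurable — and the degenerate cases $h(x)\in\{0,\infty\}$, where $q(x)=0$ by definition and the claimed inequality is trivial or follows from \eqref{eq:ph} directly. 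Monotone convergence then lets one pass from the partial sums to the full series $\tilde p$, giving $\int_X \tilde p_t(x,y)h(y)m(dy)\le h(x)$.
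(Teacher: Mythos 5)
Your proposal is correct, and the ``cleanest route'' you identify---inducting directly on the partial sums $\sum_{k=0}^N p^{(k)}_t h(x)\le h(x)$, plugging the inductive hypothesis into \eqref{eq:pn}, collapsing with Chapman--Kolmogorov, replacing $q(z)h(z)$ via \eqref{eq:kb8.5}, and absorbing the new term with \eqref{eq:fp} and \eqref{eq:kb5.5} before passing to the limit---is exactly the paper's proof.
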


\begin{proof}
For $n=0,1,\ldots$ and $t>0$, $x\in X$, we consider
\begin{equation*}\label{eq:phn}
p^{(n)}_th(x):=\int_X p^{(n)}_t(x,y)h(y)\,{m(dy)},
\end{equation*}
and we claim that 
\begin{equation}\label{eq:fkb}
 \sum_{k=0}^n p^{(k)}_t h(x) \leq h(x).
\end{equation}
By (\ref{eq:ph}) this holds for $n=0$.
By (\ref{eq:pn}), Fubini-Tonelli, induction and (\ref{eq:qmu}),
\begin{align}
\sum_{k=1}^{n+1}p^{(k)}_t h(x)
&=\int_X \int_0^t \int_X  p_s(x,z)\,q(z)\sum_{k=0}^np^{(k)}_{t-s}(z,y)h(y)\,{m(dy)}\,ds\,{m(dz)}\nonumber\\
&\leq \int_0^t \int_X p_s(x,z)\,q(z)h(z)\,{m(dz)}\,ds \nonumber\\
&=\int_0^t \int_X p_s(x,z)\int_0^\infty f'(u) \int_X p_u(z,w)\,\mu(dw)\,du\,{m(dz)}\,ds.\nonumber \\
&= \int_0^t \int_0^\infty f'(u) p_{s+u}\mu(x) \,du\,ds,\nonumber
\end{align}
where in the last passage we used \eqref{eq:ck} and \eqref{eq:psmu}.
By \eqref{eq:fp},
\begin{align*}
\sum_{k=1}^{n+1}p^{(k)}_t h(x)
&\le  \int_0^\infty \int_0^{u\wedge t} f'(u-s) \,ds\, p_u\mu(x) \,du\\
&\leq  \int_0^\infty [f(u) - f(u - u\wedge t)] \,p_u\mu(x) \,du\\
&=  \int_0^\infty [f(u) - f(u-t)] \,p_u\mu(x) \,du,
\end{align*}
because $f(s)=0$ if $s\le 0$.
By this and 
\eqref{eq:kb5.5} we obtain
\begin{align*}
\sum_{k=0}^{n+1}p^{(k)}_t h(x) &\leq
\int_t^\infty f(u-t) p_u\mu(x) \,du \\
&\quad + \int_0^\infty [f(u) - f(u-t)]\, p_u\mu(x) \,du\\
&= \int_0^\infty f(u) p_u\mu(x)\,du=h(x).
\end{align*}
The claim \eqref{eq:fkb} is proved.
The theorem follows by letting
$n\to \infty$.
\end{proof}
\begin{rem}\label{rem:sl}
Theorem~\ref{thm:perturbation} asserts that $h$ is supermedian for $\tilde p$.
This is much more than 
\eqref{eq:ph}, but \eqref{eq:ph} may also be useful in applications \cite[Lemma~5.2]{2014TJ}.
\end{rem}

We shall see in Section~\ref{sec:A} that the above construction gives integral finiteness (non-explosion) results for {specific} Schr\"odinger perturbations 
with rather singular $q$, cf. 
Corollaries~\ref{cor:aSp} and \ref{cor:aSpL}.
In the next section $q$ will serve as an admissible weight in a Hardy inequality.
\section{
Hardy inequality}\label{sec:Hi}
Throughout this section we let  $p$, $f$, $\mu$, $h$ and $q$ be as defined in Section~\ref{sec:Sc}.
Additionally we shall assume that  $p$ is Markovian, namely $\int_X p_t(x,y){m(dy)}\le 1$ for all $x\in X$.
In short, $p$ is a subprobability transition density.
By Holmgren criterion \cite[Theorem 3, p. 176]{MR1892228}, we then have $p_tu\in L^2(m)$
for each $u\in L^2(m)$, {in fact $\int_X [p_t u(x)]^2m(dx)\le \int_X u(x)^2 m(dx)$}. 
Here $L^2(m)$ is
the collection of all the real-valued square-integrable 
$\mathcal{M}$-measurable functions on $X$. As usual,  we identify $u,v\in L^2(m)$ if $u=v$ $m$-a.e. on $X$. The space (of equivalence classes) $L^2(m)$ is equipped with the scalar product 
$\langle u, v\rangle=\int_X u(x)v(x)m(dx)$. 
Since the semigroup of operators $(p_t,t>0)$ is self-adjoint and weakly measurable, we have
$$
\langle p_t u,u\rangle=\int_{[0,\infty)}e^{-\lambda t}d\langle P_\lambda u,u\rangle,
$$
where $P_\lambda$ is the spectral decomposition of the operators, see \cite[Section~22.3]{MR0423094}.
For $u \in L^2(m)$ and $t>0$ we let
$$ \EEE^{(t)}(u, u)= \frac{1}{t} \langle u - p_t u, u \rangle.$$
{By the spectral decomposition,} 
$t\mapsto \EEE^{(t)}(u, u)$ is nonnegative and nonincreasing \cite[Lemma 1.3.4]{MR2778606}, which allows to define
the quadratic form 
of $p$, 
\begin{align}
\label{def:DEEE}
 \EEE(u,u) &= \lim_{t\to 0}  \EEE^{(t)}(u, u),\quad u\in L^2(m).
\end{align}
The domain of the form is 
defined by the condition $\EEE(u,u)<\infty$
\cite{MR2778606}.

The following is a Hardy-type inequality with a remainder.
\begin{thm}\label{thm:Hardy}
If $u\in L^2(m)$ and $u=0$  on $\{x \in X: h(x)=0 \mbox{ or } \infty\}$, then
\begin{align}
&\EEE(u, u) 
\geq   
  \int_{{X}} u(x)^2 q(x) \,m(dx) \label{eq:Hardy} \\
&+
\liminf_{t\to 0} \int_X \int_X \frac{p_t(x,y)}{2t} \left(\frac{u(x)}{h(x)}-\frac{u(y)}{h(y)} \right)^2 h(y)h(x) m(dy)m(dx).
\nonumber
\end{align}
If  $f(t)=t_+^\beta$ with $\beta\geq 0$ in \eqref{eq:h} or, more generally, if $f$ is 
absolutely continuous and there are $\delta>0$ and $c<\infty$ such that
\begin{equation}\label{eqn:ff}
[f(s) - f(s-t)]/t\le cf'(s) \qquad \mbox{for all $s>0$ and $0<t<\delta$},
\end{equation}
then 
for every $u\in L^2(m)$
\begin{align}
&\EEE(u, u) 
=   
  \int u(x)^2 q(x) \,m(dx) \label{eq:Hardyv} \\
&+
\lim_{t\to 0} \int_X \int_X \frac{p_t(x,y)}{2t} \left(\frac{u(x)}{h(x)}-\frac{u(y)}{h(y)} \right)^2 h(y)h(x) m(dy)m(dx),
\nonumber
\end{align}
\end{thm}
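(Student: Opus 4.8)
The plan is to work at the level of the approximating forms $\EEE^{(t)}$ and pass to the limit. The key algebraic identity is the pointwise "ground state representation'': for $u \in L^2(m)$, write $u(x) = v(x)h(x)$ wherever $0 < h(x) < \infty$ (and recall $u=0$ elsewhere under our hypotheses). Then for fixed $t>0$,
\begin{align*}
\langle u - p_t u, u\rangle
&= \int_X u(x)^2\, m(dx) - \int_X\int_X p_t(x,y)\,u(y)u(x)\,m(dy)\,m(dx)\\
&= \int_X u(x)^2\Big(1 - \tfrac{p_t h(x)}{h(x)}\Big) m(dx)\\
&\quad + \int_X\int_X \tfrac{p_t(x,y)}{2}\Big(\tfrac{u(x)}{h(x)} - \tfrac{u(y)}{h(y)}\Big)^2 h(x)h(y)\,m(dy)\,m(dx),
\end{align*}
which is the standard expansion $ab - \tfrac12(a-b)^2(\cdots)$ carried out symmetrically using Chapman--Kolmogorov symmetry of $p_t$; one should note that $u(x)^2\,p_th(x)/h(x)$ is well defined since $u$ vanishes where $h\in\{0,\infty\}$. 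Dividing by $t$, the first term on the right is $\int_X u(x)^2 \,\tfrac{h(x)-p_th(x)}{t\,h(x)}\,m(dx)$, and the second is exactly the double integral appearing in \eqref{eq:Hardy}--\eqref{eq:Hardyv}. So everything reduces to understanding $\lim_{t\to0}\tfrac{h - p_t h}{t\,h}$ against $u^2\,m(dx)$.

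From \eqref{eq:kb5.5} we have the exact formula $h(x) - p_t h(x) = \int_0^\infty [f(s) - f(s-t)]\,p_s\mu(x)\,ds$, so that
$$\frac{h(x) - p_t h(x)}{t} = \int_0^\infty \frac{f(s)-f(s-t)}{t}\,p_s\mu(x)\,ds.$$
As $t \to 0^+$, the integrand converges to $f'(s)p_s\mu(x)$ for a.e.\ $s$ (at points of differentiability of $f$), and by \eqref{eq:fp} the difference quotient $[f(s)-f(s-t)]/t$ is nonnegative; hence Fatou gives $\liminf_{t\to0}\tfrac{h-p_th}{t} \ge \int_0^\infty f'(s)p_s\mu(x)\,ds \ge q(x)h(x)$ by \eqref{eq:kb8.5}. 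Dividing by $h(x)$ and integrating against $u^2$, another application of Fatou yields
$$\liminf_{t\to0}\int_X u(x)^2\,\frac{h(x)-p_th(x)}{t\,h(x)}\,m(dx) \ge \int_X u(x)^2 q(x)\,m(dx).$$
Combined with the identity above and the definition $\EEE(u,u) = \lim_{t\to0}\EEE^{(t)}(u,u)$, and using that $\liminf(A_t + B_t) \ge \liminf A_t + \liminf B_t$ applied with $A_t,B_t$ the two nonnegative pieces, this gives \eqref{eq:Hardy}. (One must be slightly careful: to split the $\liminf$ we use that $\EEE^{(t)}(u,u)$ itself converges, so $\liminf$ of the sum is the limit, which then dominates the sum of the $\liminf$s.)

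For the equality \eqref{eq:Hardyv} under hypothesis \eqref{eqn:ff} (or $f(s)=s_+^\beta$, where one checks \eqref{eqn:ff} directly), the point is to upgrade the two applications of Fatou to applications of dominated convergence. The domination \eqref{eqn:ff} gives $[f(s)-f(s-t)]/t \le c f'(s)$ for $s>0$ and $t<\delta$, and since $\int_0^\infty f'(s)p_s\mu(x)\,ds = q(x)h(x) \le h(x)$-finite wherever it matters, dominated convergence yields $\lim_{t\to0}\tfrac{h-p_th}{t} = \int_0^\infty f'(s)p_s\mu(x)\,ds$ with \emph{equality}, pointwise in $x$. To integrate against $u^2$ one needs an integrable majorant of $u(x)^2\,\tfrac{h(x)-p_th(x)}{t\,h(x)}$; here $\tfrac{h-p_th}{t} \le c\int_0^\infty f'(s)p_s\mu\,ds = c\,qh$ by \eqref{eqn:ff} and \eqref{eq:kb8.5}, so the majorant is $c\,u(x)^2 q(x)$ — but this is integrable only once we know $\int u^2 q < \infty$, i.e.\ only once we know $\EEE(u,u) < \infty$. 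Thus the equality \eqref{eq:Hardyv} is an identity between $[0,\infty]$-valued quantities: when $\EEE(u,u) = \infty$ both sides are $+\infty$ (the left by assumption, the right because the remainder term, being a limit of the monotone-in-$t$ quantities $\EEE^{(t)}$ minus the other piece, must blow up), and when $\EEE(u,u)<\infty$ the dominated convergence argument applies and forces the remainder term to have a genuine limit equal to $\EEE(u,u) - \int u^2 q$.

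The main obstacle is precisely this last bookkeeping: justifying that the remainder double integral has an honest limit (not merely a $\liminf$) and that the three-term relation $\EEE = \int u^2 q + (\text{remainder})$ holds as an equality in $[0,\infty]$. The clean way is to observe that for each $t$, $\EEE^{(t)}(u,u)$ equals the sum of the (nonnegative) potential-type term $\int u^2\,\tfrac{h-p_th}{th}\,m(dx)$ and the (nonnegative) remainder term $R_t$; the first term converges to $\int u^2 q$ by monotone/dominated convergence as above (monotonicity of $[f(s)-f(s-t)]/t$ in $t$ fails in general, so one really does use \eqref{eqn:ff}), hence $R_t = \EEE^{(t)}(u,u) - \int u^2\tfrac{h-p_th}{th}$ converges to $\EEE(u,u) - \int u^2 q$ as a limit in $[-\infty,\infty]$, and nonnegativity of $R_t$ together with $\EEE(u,u)\ge\int u^2 q$ (already proved) pins the limit in $[0,\infty]$. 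This gives \eqref{eq:Hardyv}.
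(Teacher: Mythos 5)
Your proof is correct and follows essentially the same route as the paper: the same decomposition of $\EEE^{(t)}(u,u)$ into the potential-type term $I_t$ and the nonnegative remainder $J_t$, Fatou's lemma for the inequality, and dominated convergence under \eqref{eqn:ff} (with majorant $c\,u^2 q$ and the case split on whether $\int u^2 q<\infty$) for the equality, concluding that $J_t$ then converges because $\EEE^{(t)}$ and $I_t$ do.
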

\begin{proof}
Let $v=u/h$, with the convention that $v(x)=0$ if $h(x)=0$ or $\infty$.
Let $t>0$. We note that $|vh|\le |u|$, thus $vh\in L^2(m)$ and
by \eqref{eq:ph},
$v p_th \in L^2(m)$. 
We then have
\begin{align}\label{eq:rEtnJtIt}
 \EEE^{(t)}(vh, vh) 
 &= \langle v \frac{h-p_th}{t}, vh \rangle + 
   \langle  \frac{vp_th-p_t(vh)}{t} , vh \rangle =: I_t + J_t. 
\end{align}
By the definition of $J_t$ and the symmetry \eqref{eq:psymmetric} of $p_t$,  
\begin{align*}
 J_t  &=
 \frac{1}{t} \int_X \int_X p_t(x,y) [v(x)-v(y)]  h(y)\, m(dy)\, v(x) h(x) \,m(dx)  \\
&=
  \int_X \int_X \frac{p_t(x,y)}{2t} [v(x)-v(y)]^2 h(x)h(y) \,m(dx)\,m(dy) \geq 0.
\end{align*}
To deal with $I_t$, we let $x\in X$, assume that $h(x)<\infty$, and consider
\begin{align*}
 (h-p_th)(x) &= \int_0^\infty f(s) p_s\mu(x) \,ds - \int_0^\infty f(s) p_{s+t}\mu(x) \,ds \\
 &=\int_0^\infty [f(s) - f(s-t)] p_s\mu(x) \,ds.
\end{align*}
Thus,
\begin{align*}
I_t&=\int_X v^2(x)h(x) \int_0^\infty \frac1t[f(s) - f(s-t)]\ p_s\mu(x) \,ds\, m(dx).
\end{align*} 
By \eqref{def:DEEE} and Fatou's lemma,
\begin{align}
&\EEE(vh, vh) 
\geq   
  \int_X \int_0^\infty f'(s) p_s\mu(x) \,ds\, v^2(x)h(x)\,m(dx)  \label{eq:prh} \\
&+
\liminf_{t\to 0} \int_X \int_X  \frac{p_t(x,y)}{2t} \left[v(x)-v(y)\right]^2 h(y)h(x) m(dy)m(dx)\nonumber\\
&={ \int_X v^2(x)h^2(x) q(x) m(dx)}\nonumber\\
&{+
\liminf_{t\to 0} \int_X \int_X  \frac{p_t(x,y)}{2t} \left[v(x)-v(y)\right]^2 h(y)h(x) m(dy)m(dx)}.
\nonumber
\end{align}
Since $u=0$  on $\{x \in X: h(x)=0 \mbox{ or } \infty\}$, we have $u=vh$, hence $\EEE^{(t)}(u, u)
=\EEE^{(t)}(vh, vh)$ for all $t>0$, and so $\EEE(u, u)
=\EEE(vh, vh)$. From \eqref{eq:prh} we obtain 
\eqref{eq:Hardy}.

If $f$ is absolutely continuous on $\R$, then \eqref{eq:fp} becomes equality, {and we return to \eqref{eq:rEtnJtIt} to analyse $I_t$ and $J_t$ more carefully.} 
If $\int_X u(x)^2 q(x) \,m(dx)<\infty$, which is satisfied in particular when $\EEE(u,u)<\infty$,
and if \eqref{eqn:ff} holds,
then we can apply
Lebesgue dominated convergence 
theorem to $I_t$. In view of \eqref{def:DEEE} and \eqref{eq:rEtnJtIt}, the limit of $J_t$ then also exists, and we obtain \eqref{eq:Hardyv}.
If $\int_X u(x)^2 q(x) \,m(dx)=\infty$, then \eqref{eq:prh} trivially becomes equality. 
Finally, \eqref{eqn:ff} holds for $f(t)=t_+^\beta$ with $\beta\ge 0$.
\end{proof}
\begin{cor}For every $u\in L^2(m)$ we have $\EEE(u, u) 
\geq   
  \int_{{X}} u(x)^2 q(x) \,m(dx)$.
\end{cor}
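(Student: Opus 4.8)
The plan is to reduce the Corollary to Theorem~\ref{thm:Hardy}. The last term on the right-hand side of \eqref{eq:Hardy} is a $\liminf$ of nonnegative integrals, hence nonnegative, so Theorem~\ref{thm:Hardy} already yields $\EEE(u,u)\ge\int_X u(x)^2 q(x)\,m(dx)$ for every $u\in L^2(m)$ that vanishes on $B:=\{x\in X: h(x)=0 \text{ or } \infty\}$. Thus the only work is to remove that restriction.

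For arbitrary $u\in L^2(m)$ I would split $u=u_1+u_2$ with $u_1=u\,\mathbf 1_{B^c}$ and $u_2=u\,\mathbf 1_{B}$; both lie in $L^2(m)$ since $|u_i|\le|u|$, and $u_1$ vanishes on $B$. Because $q\equiv 0$ on $B$ by its very definition, $\int_X u^2 q\,m(dx)=\int_X u_1^2 q\,m(dx)$, so it suffices to prove $\EEE(u,u)\ge\EEE(u_1,u_1)$ and then apply the case already treated to $u_1$.

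The heart of the matter is that $B$ is $p$-invariant modulo $m$-null sets. Indeed, \eqref{eq:ph} gives $p_t h(x)\le h(x)$ for all $x$: when $h(x)<\infty$ this forces $p_t(x,\{h=\infty\})=0$, and when $h(x)=0$ it forces $p_t(x,\{h>0\})=0$ (the integrand being nonnegative and $p_t\ge0$). The reverse directions follow from the symmetry \eqref{eq:psymmetric} and Fubini--Tonelli, e.g. $\int_{\{h=\infty\}}p_t\mathbf 1_{\{h<\infty\}}\,m(dx)=\int_{\{h<\infty\}}p_t\mathbf 1_{\{h=\infty\}}\,m(dx)=0$, whence $p_t(x,\{h<\infty\})=0$ for $m$-a.e.\ $x$ with $h(x)=\infty$, and similarly $p_t(x,\{h=0\})=0$ for $m$-a.e.\ $x$ with $h(x)>0$. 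Consequently, for every $t>0$, $p_t u_1=0$ $m$-a.e.\ on $B$ and $p_t u_2=0$ $m$-a.e.\ on $B^c$, so the cross terms $\langle p_t u_1,u_2\rangle$, $\langle p_t u_2,u_1\rangle$ and $\langle u_1,u_2\rangle$ all vanish and $\EEE^{(t)}(u,u)=\EEE^{(t)}(u_1,u_1)+\EEE^{(t)}(u_2,u_2)\ge\EEE^{(t)}(u_1,u_1)$. Letting $t\to0$ and using \eqref{def:DEEE} gives $\EEE(u,u)\ge\EEE(u_1,u_1)\ge\int_X u_1^2 q\,m(dx)=\int_X u^2 q\,m(dx)$, as required.

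I expect the only delicate point to be the invariance of $B$, and within it the case $\{h=\infty\}$, where the supermedianity bound $p_t h\le h$ is vacuous pointwise and must be exploited through the finiteness it encodes on $B^c$ together with symmetry; everything else is bookkeeping. (In all the concrete settings of Sections~\ref{sec:A}, \ref{s:asjm} and \ref{sec:wls} the set $B$ is $m$-null, so $u=u_1$ in $L^2(m)$ and the Corollary is literally Theorem~\ref{thm:Hardy} with the nonnegative remainder discarded.)
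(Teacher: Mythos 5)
Your proof is correct, and it actually supplies a step that the paper leaves implicit: the Corollary is stated right after Theorem~\ref{thm:Hardy} without proof, and a literal application of \eqref{eq:Hardy} only covers $u$ vanishing on $B=\{h=0 \text{ or } \infty\}$. Your observation that $B$ is $p$-invariant modulo $m$-null sets is exactly what is needed to remove that restriction. The argument is sound at each point: from $p_t h\le h$ one gets $p_t(x,\{h=\infty\})=0$ when $h(x)<\infty$ and $p_t(x,\{h>0\})=0$ when $h(x)=0$; the reverse inclusions for $m$-a.e.\ $x$ follow by symmetry and Tonelli (using $\sigma$-finiteness of $m$); consequently the cross terms vanish and $\EEE^{(t)}(u,u)=\EEE^{(t)}(u_1,u_1)+\EEE^{(t)}(u_2,u_2)\ge\EEE^{(t)}(u_1,u_1)$, from which the claim follows on letting $t\to 0$ and using $q\equiv 0$ on $B$. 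One small remark: you do not even need the decomposition to be orthogonal for the $\liminf$ form; since $\EEE^{(t)}(u_1,u_1)\le\EEE^{(t)}(u,u)$ and both sides are monotone in $t$, the comparison survives the limit. Your closing parenthetical is also apt: in all the concrete examples of Sections~\ref{sec:A}, \ref{s:asjm}, \ref{sec:wls}, $B$ is $m$-null and the Corollary is immediate from \eqref{eq:Hardy}, which is presumably why the authors felt no proof was needed; your argument shows it holds at the stated level of generality.
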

We are interested in non-zero quotients $q$. This calls for lower bounds of the numerator and upper bounds of the denominator.
The following consequence of \eqref{eq:Hardy} applies when sharp estimates of $p$ are known.
\begin{cor}\label{rem:cppb}
Assume there are a 
$\mathcal{B}_{(0,\infty)}\times \mathcal{M}\times \mathcal{M}$-measurable 
function 
$\bar p$ and a constant $c\ge 1$ such that for every $(t,x,y) \in (0, \infty)\times X \times X$,
\begin{equation*}\label{eqn:hcomp}
 c^{-1} p_t(x,y) \le  \bar p_t(x,y)  \le c p_t(x,y). \end{equation*}
Let
$$
\bar h(x) = \int_0^\infty  \int_X f(s) \bar p_s(x, y) \mu(dy) \,ds,
$$
and let $\bar q(x)=0$ if $\bar h(x)=0$ or $\infty$, else let
$$
 \bar q(x) = 
\frac{1}{\bar h(x)} \int_0^\infty f'(s) \bar p_s\mu(x) \,ds.
$$
Then $c^{-1}h\le \bar h \le c h$, $c^{-2}q\le \bar q \le c^2 q$, and 
for $u\in L^2(m)$ such that $u=0$  on $X\cap \{\bar h=0 \mbox{ or } \infty\}$, we have
\begin{align}
&\EEE(u, u) 
\geq   
  c^{-2}\int u(x)^2 q(x) \,m(dx). \label{eq:Hardynew} 
\end{align}
\end{cor}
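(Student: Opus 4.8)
The plan is to deduce Corollary~\ref{rem:cppb} directly from Theorem~\ref{thm:Hardy}, more precisely from the already-proven inequality \eqref{eq:Hardy}, by transferring all quantities to the bisided comparison function $\bar p$ and then comparing back. First I would establish the elementary two-sided bounds $c^{-1}h\le\bar h\le ch$: both $h$ and $\bar h$ are built from $p$ (resp.\ $\bar p$) by integrating against the same nonnegative kernels $f(s)\,ds$ and $\mu(dy)$, so the pointwise inequality $c^{-1}p_t(x,y)\le\bar p_t(x,y)\le c\,p_t(x,y)$ integrates through by monotonicity of the integral, giving the claim for every $x\in X$. In particular $\{\bar h=0\}=\{h=0\}$ and $\{\bar h=\infty\}=\{h=\infty\}$, so the two exceptional sets coincide and the hypothesis ``$u=0$ on $X\cap\{\bar h=0\text{ or }\infty\}$'' is exactly the hypothesis needed to apply Theorem~\ref{thm:Hardy} with $h$.

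Next I would handle the numerator: the quantity $\int_0^\infty f'(s)\,p_s\mu(x)\,ds$ (resp.\ with $\bar p$) is again an integral of the kernels against $f'(s)\,ds\ge 0$ and $\mu(dy)$, so it too is comparable with constant $c$. Dividing the two-sided bound for the numerator by the two-sided bound for the denominator $h$ (resp.\ $\bar h$) yields $c^{-2}q\le\bar q\le c^2 q$ on the set where neither $h$ nor $\bar h$ is $0$ or $\infty$; on the complementary set both $q$ and $\bar q$ vanish by definition, so the comparison $c^{-2}q\le\bar q\le c^2q$ holds everywhere on $X$. This is the only place where the constant degrades from $c$ to $c^2$, which is the source of the $c^{-2}$ in \eqref{eq:Hardynew}.

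Finally, I would invoke \eqref{eq:Hardy} of Theorem~\ref{thm:Hardy}: since $u=0$ on $\{h=0\text{ or }\infty\}$, we get
\[
\EEE(u,u)\ge\int_X u(x)^2 q(x)\,m(dx)+(\text{nonnegative remainder})\ge\int_X u(x)^2 q(x)\,m(dx).
\]
Combining this with $q\ge c^{-2}\bar q$ would give $\EEE(u,u)\ge c^{-2}\int u^2\bar q\,m$, but the stated conclusion \eqref{eq:Hardynew} is phrased in terms of $q$ itself, which follows trivially (with room to spare) from $\EEE(u,u)\ge\int u^2 q\,m\ge c^{-2}\int u^2 q\,m$; one should mention that the point of the corollary is that in applications only $\bar q$ (built from the known sharp estimate $\bar p$) is computable, and $c^{-2}q\le\bar q$ lets one replace $q$ by $\bar q$ on the right-hand side.

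There is no serious obstacle here; the statement is essentially a bookkeeping consequence of monotonicity of integrals plus Theorem~\ref{thm:Hardy}. The one point requiring a little care is the exceptional sets: one must check that $\{\bar h=0\text{ or }\infty\}=\{h=0\text{ or }\infty\}$ so that the hypothesis on $u$ is the correct one to feed into Theorem~\ref{thm:Hardy}, and that the definitions of $q,\bar q$ as $0$ on these sets make the pointwise comparison $c^{-2}q\le\bar q\le c^2q$ valid globally rather than merely off the exceptional set.
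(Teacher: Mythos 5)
Your proof is correct, and the paper in fact states this corollary without an explicit proof, remarking only that it is a ``consequence of \eqref{eq:Hardy}''; your argument --- pushing the pointwise comparison $c^{-1}p_t\le\bar p_t\le c\,p_t$ through the nonnegative integrals defining $h,\bar h$ and their numerators, noting that $\{h=0\text{ or }\infty\}=\{\bar h=0\text{ or }\infty\}$ because $c\ge 1$, and then invoking Theorem~\ref{thm:Hardy} --- is exactly the intended one, and the factor $c^{-2}$ arises precisely as you say, from dividing a numerator comparable within $c$ by a denominator comparable within $c$. Your observation that \eqref{eq:Hardynew} as printed (with $q$ rather than $\bar q$ on the right-hand side) is trivially subsumed by the preceding unnumbered Corollary is a genuine catch: the useful statement, and the one implicitly applied in Sections~\ref{s:asjm} and~\ref{sec:wls}, is $\EEE(u,u)\ge\int u^2 q\,m(dx)\ge c^{-2}\int u^2\bar q\,m(dx)$, where $\bar q$ is the quantity one can actually compute from the sharp estimate $\bar p$.
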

In the remainder of the paper we  discuss applications of the results in Section~\ref{sec:Sc} and Section~\ref{sec:Hi} to transition densities with certain scaling properties.

\section{Applications to (fractional) Laplacian}\label{sec:A}

Let $0<\alpha<2$, $d\in \mathbb{N}$, $\mathcal{A}_{d,-\alpha}=
2^{\alpha}\Gamma\big((d+\alpha)/2\big)\pi^{-d/2}/|\Gamma(-\alpha/2)|$ and
$\nu(x,y)=\mathcal{A}_{d,-\alpha}|y-x|^{-d-\alpha}$, where $x,y\in \Rd$.  Let $m(dx)=dx$, the Lebesgue measure on $\Rd$. 
Throughout this section,  
$g$ is the Gaussian kernel 
\begin{equation}\label{eq:Gk}
 g_t(x) = (4\pi t)^{-d/2} e^{-|x|^2/(4t)}\,, \quad t>0,\quad x\in \Rd\,.
\end{equation}
For  $u\in L^2(\R^d,dx)$ we define
\begin{equation}\label{eqHifL}
\mathcal{E}(u,u)=
\frac12
\int_{\R^d}\!\int_{\R^d} 
[u(x)-u(y)]^2\nu(x,y)
\,dy\,dx.
\end{equation}
The important case $\beta=(d-\alpha)/(2\alpha)$ in the following Hardy equality for the Dirichlet form of the fractional Laplacian was given by 
Frank, Lieb and Seiringer in \cite[Proposition 4.1]{MR2425175}
 (see \cite{MR2984215} for another proof; see also
\cite{MR0436854}).
In fact, \cite[formula (4.3)]{MR2425175} also covers the case of $(d-\alpha)/(2\alpha) \leq \beta \leq (d/\alpha)-1$ and smooth compactly supported functions $u$ in  the following Proposition.
Our proof is  different from that of \cite[Proposition 4.1]{MR2425175} because we do not use the Fourier transform.
\begin{prop}\label{cor:FS}
If $0<\alpha<d\wedge 2$, $0
\le
\beta 
\le(d/\alpha)-1$ and $u\in L^2(\Rd)$, then 
\begin{align*}
&\mathcal{E}(u,u)=
C
\int_{\R^d} \frac{u(x)^2}{|x|^\alpha}\,dx 
 +   \int_{\R^d}\!\int_{\R^d} 
\left[\frac{u(x)}{h(x)}-\frac{u(y)}{h(y)}\right]^2
h(x)h(y) \nu(x,y)
\,dy\,dx\,,
\end{align*}
where $C=2^{\alpha} \Gamma(\frac{d}{2} - \tfrac{\alpha\beta}{2})\Gamma(\frac{\alpha(\beta+1)}{2})\Gamma(\frac{d}{2} - \tfrac{\alpha(\beta+1)}{2})^{-1}\Gamma(\frac{\alpha \beta }{2})^{-1}$,
$h(x)=|x|^{\alpha(\beta+1)-d}$.
We get a maximal $C=2^{\alpha} \Gamma(\tfrac{d+\alpha}{4})^2 \Gamma(\tfrac{d-\alpha}{4})^{-2}$ if  $\beta=(d-\alpha)/(2\alpha)$.
\end{prop}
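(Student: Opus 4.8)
The plan is to apply Theorem~\ref{thm:Hardy} to the specific transition density $p$ whose quadratic form is $\mathcal{E}$ in \eqref{eqHifL}, and then to compute the resulting functions $h$ and $q$ explicitly. The natural candidate for $p$ is the transition density of the isotropic $\alpha$-stable process on $\Rd$; its quadratic form on $L^2(\Rd,dx)$ is precisely \eqref{eqHifL}, and this identification (Dirichlet form $=$ form of $\mathcal{E}$, plus the fact that the jump kernel $\nu(x,y)$ replaces the abstract $p_t(x,y)/2t$ in the limit) is a standard fact I would cite rather than reprove. Taking $f(t)=t_+^\beta$, so that \eqref{eqn:ff} holds and the Hardy \emph{equality} \eqref{eq:Hardyv} applies, and taking $\mu=\delta_0$ (Dirac mass at the origin), we get
\begin{equation*}
h(x)=\int_0^\infty s^\beta\, p_s(x,0)\,ds,\qquad
q(x)=\frac{1}{h(x)}\int_0^\infty \beta s^{\beta-1}\,p_s(x,0)\,ds .
\end{equation*}

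The key computational step is the scaling of the stable kernel: $p_s(x,0)=s^{-d/\alpha}p_1(s^{-1/\alpha}x,0)$, which after the substitution $r=s^{-1/\alpha}|x|$ yields $h(x)=c_1\,|x|^{\alpha(\beta+1)-d}$ and $q(x)=c_2\,|x|^{-\alpha}$, with $c_1,c_2$ expressed through the one-dimensional integrals $\int_0^\infty r^{d-\alpha(\beta+1)-1}p_1(re,0)\,dr$ and $\int_0^\infty r^{d-\alpha\beta-1}p_1(re,0)\,dr$ (where $e$ is a unit vector). Dividing gives $C=c_2/c_1\cdot$ (the $|x|$-powers already match because $h(x)^2\nu$-homogeneity forces $\alpha(\beta+1)-d+\ldots$; more simply $q=C|x|^{-\alpha}$ directly). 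To get the stated closed form of $C$ one evaluates these Mellin-type integrals of $p_1$. Rather than integrating the stable density directly, I would use the subordination formula $p_1(x,0)=\int_0^\infty g_v(x)\,\eta(v)\,dv$ with $g$ the Gaussian kernel \eqref{eq:Gk} and $\eta$ the density of the $\alpha/2$-stable subordinator at time $1$; then $\int_0^\infty r^{\rho-1}p_1(re,0)\,dr$ factors as a product of a Gamma integral (from the Gaussian) and a moment $\int_0^\infty v^{-\rho/2}\eta(v)\,dv=\Gamma(\rho/\alpha)/\Gamma(\rho/2)$ (a standard negative-moment formula for stable subordinators). Plugging $\rho=d-\alpha(\beta+1)$ and $\rho=d-\alpha\beta$ and simplifying produces exactly $C=2^\alpha\Gamma(\tfrac d2-\tfrac{\alpha\beta}{2})\Gamma(\tfrac{\alpha(\beta+1)}{2})\Gamma(\tfrac d2-\tfrac{\alpha(\beta+1)}{2})^{-1}\Gamma(\tfrac{\alpha\beta}{2})^{-1}$.

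With $h$ and $C$ in hand, \eqref{eq:Hardyv} of Theorem~\ref{thm:Hardy} gives the claimed identity once I check that the abstract remainder term $\lim_{t\to0}\int\int \frac{p_t(x,y)}{2t}(v(x)-v(y))^2h(x)h(y)$ equals $\int\int [v(x)-v(y)]^2 h(x)h(y)\nu(x,y)\,dy\,dx$ with $v=u/h$; this is again the standard convergence of $\frac{p_t(x,y)}{2t}$ to the jump intensity $\nu(x,y)$ for the stable process, valid here since $vh$ and the kernel are nonnegative so monotone/dominated convergence applies without delicacy. The side conditions of Theorem~\ref{thm:Hardy} are met: $f(t)=t_+^\beta$ is admissible for every $\beta\ge0$, and the exclusion set $\{h=0\text{ or }\infty\}$ is empty whenever $0<\alpha(\beta+1)-d$ fails in only a trivial way — in fact for $0\le\beta\le d/\alpha-1$ the exponent $\alpha(\beta+1)-d$ lies in $[\alpha-d,0]$, so $h(x)=|x|^{\alpha(\beta+1)-d}$ is finite and positive for all $x\ne0$ and the single point $x=0$ is $m$-null, so no restriction on $u$ is needed. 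Finally, optimizing $C$ over $\beta$: set the derivative of $\log C$ in $\beta$ to zero, which by the reflection-type symmetry of the four Gamma factors is solved at the symmetric point $\alpha(\beta+1)=d-\alpha\beta$, i.e. $\beta=(d-\alpha)/(2\alpha)$, giving $C=2^\alpha\Gamma(\tfrac{d+\alpha}{4})^2\Gamma(\tfrac{d-\alpha}{4})^{-2}$.

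The main obstacle is the explicit evaluation of the Mellin transform of $p_1(\cdot,0)$ and the bookkeeping needed to land on the precise Gamma-factor expression for $C$; everything else (the identification of $p$ and $\mathcal{E}$, the scaling of $h$ and $q$, the convergence of the remainder term, and the optimization over $\beta$) is routine given Theorem~\ref{thm:Hardy}. A secondary point requiring a line of care is justifying the interchange of integrals in computing $h$ and $q$ via subordination, but since all integrands are nonnegative this is immediate from Tonelli's theorem.
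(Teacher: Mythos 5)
Your proposal takes essentially the same route as the paper: apply Theorem~\ref{thm:Hardy} with $f(t)=t_+^\beta$ and $\mu=\delta_0$, compute $h$ and $q$ by passing through the subordination representation $p_t(x)=\int_0^\infty g_s(x)\eta_t(s)\,ds$, identify the remainder limit with the $\nu$-integral via $p_t/t\to\nu$, and optimize over $\beta$ at the symmetric value $\beta=(d-\alpha)/(2\alpha)$; the paper simply organizes the Gamma-function bookkeeping by first integrating $t^\beta\eta_t(s)$ in $t$ and then doing the Gaussian integral, rather than by scaling to a Mellin integral of $p_1$ as you do. One small slip to flag: the negative-moment identity for the $\alpha/2$-stable subordinator should be $\int_0^\infty v^{-\rho/2}\eta(v)\,dv=\tfrac{2}{\alpha}\,\Gamma(\rho/\alpha)/\Gamma(\rho/2)$ (your version omits the factor $2/\alpha$), but since this prefactor enters both $c_1$ and $c_2$ it cancels in $C=c_2/c_1$, so your final constant is unaffected.
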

\begin{proof}
\eqref{eqHifL} is the Dirichlet form of the convolution 
semigroup of functions
defined by subordination, that is we let $p_t(x,y)=p_t(y-x)$, where
\begin{equation}\label{eq:stable-pt}
 p_t(x) = \int_0^\infty g_s(x) \eta_t(s)\,ds,
\end{equation}
$g$ is the Gaussian kernel defined in \eqref{eq:Gk}
and $\eta_t\ge 0$ is the density function of the distribution of the $\alpha/2$-stable subordinator at time $t$, see, e.g., \cite{MR2569321} and \cite{MR2778606}. Thus,
$\eta_t(s) = 0$ for $s\le 0$,
and
\[
 \int_0^\infty e^{-us} \eta_t(s)\,ds = e^{-tu^{\alpha/2}}, \quad u\geq 0.
\]
Let $-1<\beta<d/\alpha-1$. The Laplace transform of $s\mapsto \int_0^\infty t^\beta \eta_t(s)\,dt$ is
\begin{align*}
\int_0^\infty  \int_0^\infty t^\beta \eta_t(s)\,dt\, e^{-us} \,ds 
& =\int_0^\infty  t^\beta \int_0^\infty \eta_t(s) e^{-us} \,ds\,dt 
 =\int_0^\infty  t^\beta e^{-tu^{\alpha/2}} \,dt \\
&= \Gamma(\beta+1) u^{-\frac{\alpha(\beta+1)}{2} }.
\end{align*}
Since $\int_0^\infty e^{-us} s^\gamma \,ds = \Gamma(\gamma+1) u^{-(\gamma+1)}$,
\begin{equation}\label{int:tbetapt}
 \int_0^\infty t^\beta \eta_t(s)\,dt = 
   \frac{\Gamma(\beta+1)}{\Gamma(\frac{\alpha(\beta+1)}{2})} s^{\frac{\alpha(\beta+1)}{2}-1}.
\end{equation}
We consider $-\infty< \delta < d/2 - 1$ and calculate the following integral for the Gaussian kernel by substituting $s=|x|^2/(4t)$,
\begin{align}\label{int:tbetagt}
 \int_0^\infty g_t(x) t^\delta \,dt &= 
  \int_0^\infty (4\pi t)^{-d/2} e^{-|x|^2/(4t)} t^\delta\,dt \\
 &= (4\pi)^{-d/2} \left(\frac{|x|^2}{4}\right)^{\delta-d/2+1} \int_0^\infty s^{d/2-\delta-2} e^{-s}\,ds\nonumber\\
&= 4^{-\delta-1} \pi^{-d/2} |x|^{2\delta-d+2} \Gamma(d/2 - \delta - 1). \nonumber
\end{align}
{For}
$f(t):=t_+^\beta$ and $\sigma$-finite Borel measure $\mu\ge 0$ on $\Rd$ we have
\begin{align*}
 h(x)&:=\int_0^\infty \int_\Rd f(t) p_t(x-y)\mu(dy) \,dt \\
 &=\int_0^\infty \int_\Rd t^\beta \int_0^\infty g_s(x-y)\eta_t(s)\,ds \, \mu(dy) \,dt\\
 &= \int_\Rd \int_0^\infty \int_0^\infty t^\beta\eta_t(s) \,dt\,  g_s(x-y)\,ds \, \mu(dy) \\
&= \int_\Rd \int_0^\infty \frac{\Gamma(\beta+1)}{\Gamma(\frac{\alpha(\beta+1)}{2})} s^{\frac{\alpha(\beta+1)}{2}-1}
g_s(x-y)\,ds \, \mu(dy) \\
&=\frac{\Gamma(\beta+1)}{\Gamma(\frac{\alpha(\beta+1)}{2})} 
   \frac{\Gamma(\frac{d}{2} - \tfrac{\alpha(\beta+1)}{2})}{4^\frac{\alpha(\beta+1)}{2} \pi^{d/2}}
 \int_\Rd |x-y|^{\alpha(\beta+1)-d}\, \mu(dy),
\end{align*}
where  in the last two equalities we assume $\alpha(\beta+1)/2-1 < d/2-1$ and use \eqref{int:tbetapt} and \eqref{int:tbetagt}. If, furthermore, $\beta\ge 0$, then by the same calculation
\begin{align*}
\int_0^\infty \int_\Rd& f'(t) p_t(x,y)\mu(dy) \,dt \\
&=\beta \frac{\Gamma(\beta)}{\Gamma(\frac{\alpha \beta }{2})} 
    4^{-\frac{\alpha\beta}{2}} \pi^{-d/2}  \Gamma(\frac{d}{2} - \tfrac{\alpha\beta}{2})
 \int_\Rd |x-y|^{\alpha\beta-d}\, \mu(dy).
\end{align*}
Here the expression is zero if $\beta=0$. If $\mu=\delta_0$, then we get
\begin{equation}\label{eq:whuL}
h(x)=\frac{\Gamma(\beta+1)}{\Gamma(\frac{\alpha(\beta+1)}{2})} 
   \frac{\Gamma(\frac{d}{2} - \tfrac{\alpha(\beta+1)}{2})}{4^\frac{\alpha(\beta+1)}{2} \pi^{d/2}}
 |x|^{\alpha(\beta+1)-d}
\end{equation}
 and
\begin{align}\label{eq:wquL}
 q(x) &=
 \frac{1}{h(x)}\int_0^\infty \int_\Rd f'(t) p_t(x,y)\mu(dy) \,dt\nonumber\\
& = \frac{ 4^{\alpha/2} \Gamma(\frac{d}{2} - \tfrac{\alpha\beta}{2})\Gamma(\frac{\alpha(\beta+1)}{2}) }
    { \Gamma(\frac{d}{2} - \tfrac{\alpha(\beta+1)}{2})\Gamma(\frac{\alpha \beta }{2}) }  |x|^{-\alpha}.
\end{align}
By homogeneity, we may assume $h(x)=|x|^{\alpha(\beta+1)-d}$, without changing $q$. 
By the second statement of Theorem~\ref{thm:Hardy}, it remains to show that
\begin{align}
\lim_{t\to 0} & \int_{\R^d} \int_{\R^d} \frac{p_t(x,y)}{2t} \left[\frac{u(x)}{h(x)}-\frac{u(y)}{h(y)} \right]^2 h(y)h(x) dydx \nonumber\\
&=\frac12 \int_{\R^d}\!\int_{\R^d} \left[\frac{u(x)}{h(x)}-\frac{u(y)}{h(y)}\right]^2 h(y)h(x)\nu(x,y) \,dy\,dx\,. \label{eq:remains}
\end{align}
Since $p_t(x,y)/t \leq \nu(x,y)$ \cite{MR3165234} and $p_t(x,y)/t \to \nu(x,y)$ as $t\to 0$,
\eqref{eq:remains} follows either by the dominated convergence theorem,
if the right hand side of \eqref{eq:remains} is finite, or -- in the opposite case -- by Fatou's lemma.
If $\alpha\beta = (d-\alpha)/2$, then we obtain $h(x)=|x|^{-(d-\alpha)/2}$ and the maximal
\[
 q(x) =  \frac{ 4^{\alpha/2} \Gamma(\tfrac{d+\alpha}{4})^2 }
    { \Gamma(\tfrac{d-\alpha}{4})^2 }  |x|^{-\alpha}.
\]
Finally, the statement of the proposition is trivial for $\beta=d/\alpha-1$.
\end{proof}

\begin{cor}\label{cor:aSp}
If $0\le r\le d-\alpha$, $x\in \Rd$ and $t>0$, then
$$\int_{\Rd} p_t(y-x)|y|^{-r}dy\le |x|^{-r}.$$
If $0<r< d-\alpha$, $x\in \Rd$, $t>0$, $\beta=(d-\alpha-r)/\alpha$, $q$ is given by \eqref{eq:wquL} and $\tilde p$  is given by \eqref{eq:pti}, then
$$\int_{\Rd} \tilde p_t(y-x)|y|^{-r}dy\le |x|^{-r}.$$
\end{cor}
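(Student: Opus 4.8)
The plan is to instantiate the construction of Sections~\ref{sec:Sc}--\ref{sec:A} with the isotropic $\alpha$-stable kernel $p_t(x,y)=p_t(y-x)$ from \eqref{eq:stable-pt}, the time weight $f(t)=t_+^\beta$, and the spatial weight $\mu=\delta_0$, for a suitable exponent $\beta=\beta(r)$. First I would match parameters. By \eqref{eq:whuL}, with $\mu=\delta_0$, $\beta>-1$ and $\alpha(\beta+1)<d$, one has $h(x)=\int_0^\infty f(s)p_s\delta_0(x)\,ds=c_1\,|x|^{\alpha(\beta+1)-d}$ for some $0<c_1<\infty$; so to obtain the profile $h(x)=|x|^{-r}$ I take $\beta=(d-\alpha-r)/\alpha$, which lies in $[0,\infty)$ precisely when $0<r\le d-\alpha$ (in particular $d>\alpha$), while the side condition $\alpha(\beta+1)=d-r<d$ holds exactly because $r>0$. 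Rescaling $\mu$ by $c_1^{-1}$ lets us take $h(x)=|x|^{-r}$ outright; by the homogeneity remark in the proof of Proposition~\ref{cor:FS} this leaves $q$ unchanged, and by \eqref{eq:wquL} this $q$ is the constant multiple of $|x|^{-\alpha}$ appearing in the statement (when $\beta=0$ one simply has $f'=0$ and $q=0$).

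With the parameters fixed, both inequalities are immediate. For the first one, the case $r=0$ is the conservativeness $\int_{\Rd}p_t(y-x)\,dy=1$ (which follows from $\int_{\Rd}g_s=1$ and $\int_0^\infty\eta_t(s)\,ds=1$), and for $0<r\le d-\alpha$ it is the supermedian bound \eqref{eq:ph}, $p_th\le h$, applied to $h=|\cdot|^{-r}$. For the second inequality, with $0<r<d-\alpha$ and hence $\beta=(d-\alpha-r)/\alpha>0$, I would note that the standing hypotheses of Section~\ref{sec:Sc} hold here: $p$ is symmetric because $p_t$ is radial, it satisfies Chapman--Kolmogorov as a convolution semigroup, $p_t(x,y)\,dy$ is a $\sigma$-finite kernel, and $f(t)=t_+^\beta$ is non-decreasing and vanishes on $(-\infty,0]$. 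Then Theorem~\ref{thm:perturbation} applied to $h=|\cdot|^{-r}$ gives exactly $\int_{\Rd}\tilde p_t(y-x)|y|^{-r}\,dy\le|x|^{-r}$. Since $q\ge 0$ and every $p^{(n)}\ge 0$, we also have $\tilde p_t\ge p_t$, so for $0<r<d-\alpha$ the first inequality is in fact a consequence of the second; only the endpoint $r=d-\alpha$ and the trivial case $r=0$ need the argument of the previous sentence.

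There is no essential obstacle beyond bookkeeping. The points deserving care are: (i) checking the side conditions $\beta\ge 0$ and $\alpha(\beta+1)<d$ that make the Laplace-transform computations \eqref{int:tbetapt}--\eqref{int:tbetagt}, and hence the closed form for $h$, legitimate --- these pin down the admissible range $0\le r\le d-\alpha$ and single out the boundary case $r=d-\alpha$, where $\beta=0$, $f'=0$ and $q=0$; (ii) treating $r=0$ separately via conservativeness; and (iii) confirming that the weight $q$ manufactured by the construction coincides with \eqref{eq:wquL}, so that Theorem~\ref{thm:perturbation} applies verbatim. Granting the computations already carried out in the proof of Proposition~\ref{cor:FS}, the corollary is then a one-line application of Theorem~\ref{thm:perturbation} together with \eqref{eq:ph}.
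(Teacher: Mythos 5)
Your proof is correct and follows essentially the same route as the paper: identify $h=|\cdot|^{-r}$ as the supermedian function coming from $f(t)=t_+^\beta$ with $\beta=(d-\alpha-r)/\alpha$ and $\mu=\delta_0$, then invoke \eqref{eq:ph} for the first inequality and Theorem~\ref{thm:perturbation} for the second, noting $\tilde p\ge p$. You supply the bookkeeping (the side conditions on $\beta$, the separate treatment of $r=0$ by conservativeness and $r=d-\alpha$ where $q=0$) that the paper leaves implicit.
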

\begin{proof}
By \eqref{eq:ph} and the proof of Proposition~\ref{cor:FS}, we 
get the first estimate. 
The second estimate is stronger, because $\tilde p\ge p$, cf. \eqref{eq:pti}, and it follows from Theorem~\ref{thm:perturbation}, cf. the proof of Proposition~\ref{cor:FS}.
We do not formulate the second estimate for $r=0$ and $d-\alpha$, because the extension suggested by 
\eqref{eq:wquL} reduces to a special case of the first estimate.
\end{proof}

For completeness we now give Hardy equalities for the Dirichlet form of the Laplacian in $\Rd$.
Namely, \eqref{eq:W12} below is the optimal classical Hardy equality with remainder, 
and \eqref{eq:cHg} is its slight extension, in the spirit of Proposition~\ref{cor:FS}. 
For the equality \eqref{eq:W12}, see for example \cite[formula (2.3)]{MR1918494}, \cite[Section~2.3]{MR2469027} or \cite{MR2984215}. Equality \eqref{eq:cHg} may also be considered as a corollary of \cite[Section~2.3]{MR2469027}.
\begin{prop}\label{cor:W12}
Suppose $d\geq 3$ and $0 \le
\gamma \le d-2$.  For $u\in W^{1,2}(\Rd)$,
\begin{align}\label{eq:cHg}
\int_\Rd |\nabla u(x)|^2 dx \! =\!\gamma (d-2-\gamma) \!\int_\Rd   \frac{u(x)^2}{|x|^{2}}dx+\!\int_\Rd \left|{h(x)}\nabla\frac{u}{h}(x)\right|^2dx,
\end{align}
where $h(x)=|x|^{\gamma+2-d}$.
In particular,
\begin{equation}\label{eq:W12}
 \!\!\int_{\R^d} |\nabla u(x)|^2\,dx 
= \frac{(d-2)^2}{4} \!\!\int_{\R^d}  \frac{u(x)^2}{|x|^2}\,dx+\!\!\int_\Rd \left|{|x|^{\frac{2-d}{2}}}\nabla\frac{u(x)}{|x|^{(2-d)/2}}\right|^2dx.
\end{equation}
\end{prop}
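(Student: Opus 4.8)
The plan is to deduce Proposition~\ref{cor:W12} from Theorem~\ref{thm:Hardy} applied to the heat semigroup $p_t(x,y)=g_t(y-x)$ on $\Rd$ with $m(dx)=dx$, exactly paralleling the proof of Proposition~\ref{cor:FS} but with the Gaussian kernel replacing the subordinated kernel. Indeed, $\mathcal{E}(u,u)=\int_\Rd|\nabla u|^2\,dx$ is the quadratic form of $(g_t)$ on $W^{1,2}(\Rd)$, and $(g_t)$ is Markovian, so the hypotheses of Section~\ref{sec:Hi} apply. I would take $f(t)=t_+^\gamma$ (so $\beta=\gamma$ in the notation there, and \eqref{eqn:ff} holds) and $\mu=\delta_0$, and compute $h$ and $q$ explicitly via \eqref{int:tbetagt}: with $\delta=\gamma-d/2+1$ we need $-\infty<\delta<d/2-1$, i.e. $0\le\gamma<d-2$ (the endpoint $\gamma=d-2$ being handled trivially as at the end of the proof of Proposition~\ref{cor:FS}, since then $h\equiv 1$ and $q\equiv 0$). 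One gets $h(x)=c_1|x|^{\gamma+2-d}$ and $q(x)=c_2|x|^{-2}$ with $c_2=\gamma(d-2-\gamma)$; by homogeneity of the form one may rescale $h$ to $|x|^{\gamma+2-d}$ without changing $q$, as in Proposition~\ref{cor:FS}.

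The one genuinely different ingredient is identifying the remainder term: I must show that
\begin{align*}
\lim_{t\to 0}\int_\Rd\int_\Rd \frac{g_t(y-x)}{2t}\left(\frac{u(x)}{h(x)}-\frac{u(y)}{h(y)}\right)^2 h(x)h(y)\,dy\,dx
=\int_\Rd\left|h(x)\nabla\frac{u}{h}(x)\right|^2 dx.
\end{align*}
This is precisely the statement that the Dirichlet form associated with the Gaussian kernel, computed for the function $v=u/h$ against the reference measure $h^2\,dx$, equals $\int|\nabla v|^2 h^2\,dx$; equivalently, writing $w=vh$ this is the Markovian/Carré-du-champ identity $\frac1{2t}\langle w-g_tw,w\rangle \to \int|\nabla w|^2$ reorganized. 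The cleanest route is: first prove it for $u\in C_c^\infty(\Rd)$ with $0\notin\supp u$ (so $v=u/h\in C_c^\infty$ away from the origin and everything is smooth and bounded), where a direct Taylor expansion of $g_t$ and the fact that $\int \frac{g_t(z)}{2t}z_iz_j\,dz\to\delta_{ij}$ as $t\to 0$ gives the claim; then pass to general $u\in W^{1,2}(\Rd)$ by density, using that both sides of \eqref{eq:cHg} are (lower semicontinuous, resp. continuous) in the $W^{1,2}$ norm and that the whole identity \eqref{eq:Hardyv} of Theorem~\ref{thm:Hardy} holds as a genuine equality, so the remainder is automatically the difference of two controlled quantities. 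Finally \eqref{eq:W12} is the special case $\gamma=(d-2)/2$, which maximizes $\gamma(d-2-\gamma)=(d-2)^2/4$.

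The main obstacle I anticipate is the limit identification for the remainder term in the Gaussian case. Unlike the fractional case, where $p_t(x,y)/t$ converges monotonically (up to constants) to the jump kernel $\nu(x,y)$ and one simply invokes dominated convergence or Fatou, the Gaussian kernel has no such pointwise limiting kernel: $g_t(y-x)/t$ concentrates on the diagonal, and the limit is a genuine local (differential) expression. One must therefore do a careful local analysis — splitting the double integral into $|x-y|<\varepsilon$ and $|x-y|\ge\varepsilon$, Taylor-expanding $v=u/h$ on the diagonal, and controlling the error using smoothness of $v$ away from $0$ and the Gaussian moment bounds — before extending by density. I would also need to be slightly careful that the density argument respects the constraint ``$u=0$ where $h=0$ or $\infty$'' from Theorem~\ref{thm:Hardy}: here $h=|x|^{\gamma+2-d}$ is positive and finite off the origin and (for $\gamma<d-2$) infinite only at $\{0\}$, a set of measure zero, so the constraint is vacuous and $u=vh$ a.e., which is exactly what is used in the proof of Theorem~\ref{thm:Hardy}.
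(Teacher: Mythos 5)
Your plan is the right one in outline — apply Theorem~\ref{thm:Hardy} to the Gaussian semigroup with $\mu=\delta_0$ and a power-law time weight — but the weight you chose is wrong, and your handling of the remainder term departs from the paper in a way that is more delicate than you acknowledge.

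\emph{The weight.} You take $f(t)=t_+^{\gamma}$ and assert $h(x)=c_1|x|^{\gamma+2-d}$ and $q(x)=\gamma(d-2-\gamma)|x|^{-2}$. Plugging $\delta=\gamma$ into \eqref{int:tbetagt} gives $h(x)=c\,|x|^{2\gamma-d+2}$, not $|x|^{\gamma-d+2}$, and the corresponding quotient is $q(x)=2\gamma(d-2\gamma-2)|x|^{-2}$, which is not $\gamma(d-2-\gamma)|x|^{-2}$ except at an isolated value of $\gamma$. The convergence condition also comes out as $\gamma<d/2-1$, not $\gamma<d-2$. The paper uses $f(t)=t^{\gamma/2}$, i.e. $\delta=\gamma/2$ in \eqref{int:tbetagt}: this is what yields $h(x)=c|x|^{\gamma+2-d}$, $q(x)=\gamma(d-2-\gamma)|x|^{-2}$, and the correct range $0\le\gamma<d-2$. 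Your intermediate ``$\delta=\gamma-d/2+1$'' does not correspond to the exponent of $t$ under any consistent parametrization, so the bookkeeping needs to be redone.

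\emph{The remainder.} The paper explicitly flags that proving the convergence
$\lim_{t\to 0}\int\!\int \frac{g_t(x-y)}{2t}\bigl(\tfrac{u}{h}(x)-\tfrac{u}{h}(y)\bigr)^2h(x)h(y)\,dy\,dx=\int|h\nabla(u/h)|^2$
``directly'' is cumbersome, and sidesteps it entirely: it establishes \eqref{eq:cHg} for $u\in C_c^\infty(\Rd\setminus\{0\})$ by expanding $|\nabla(|x|^{d-2-\gamma}u)|^2|x|^{2(\gamma+2-d)}$ and applying the divergence theorem (using $\mathrm{div}(|x|^{-2}x)=(d-2)|x|^{-2}$), then extends to $C_c^\infty(\Rd)$ by a cutoff $1-\psi(nx)$ near the origin, then to $W^{1,2}(\Rd)$ by density; the identification of the limit in \eqref{eq:W12n} is obtained afterward as a byproduct, by comparing with the already-proved identity. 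Your proposal to identify the limit head-on via Taylor expansion plus Gaussian moment asymptotics $\int\frac{g_t(z)}{2t}z_iz_j\,dz\to\delta_{ij}$ is plausible for $u\in C_c^\infty(\Rd\setminus\{0\})$, but you would then still have to carry this limit through the density passage to $W^{1,2}$, which is exactly what the paper's route avoids; the statement that ``both sides are lower semicontinuous/continuous in $W^{1,2}$'' is not enough on its own, since the remainder is a double limit (in $t$ and in the approximating sequence), and the paper instead controls the approximation via the explicit identity $h\nabla(u_n/h)=\nabla u_n-u_n\nabla h/h$ together with the bound $|\nabla h|/h\le c|x|^{-1}$ and local integrability of $|x|^{-2}$ for $d\ge 3$.
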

\begin{proof} The first inequality is trivial for $\gamma=d-2$, so let $0\le \gamma<d-2$. We first prove that for $u\in L^2(\R^d,dx)$, 
\begin{align}
\mathcal{C}(u,u)&= \gamma(d-2-\gamma) \int_{\R^d}  \frac{u(x)^2}{|x|^2}\,dx
\nonumber\\
& +\lim_{t\to 0} \int_{\R^d} \int_{\R^d} \frac{{g}_t(x,y)}{2t} \left(\frac{u}{h}(x)-\frac{u}{h}(y)\right)^2 h(y)h(x) dy dx,\label{eq:W12n}
\end{align}
where  $g$ is the Gaussian kernel defined in \eqref{eq:Gk}, and $\mathcal{C}$ is the corresponding quadratic form. 
Even simpler than in the proof of Proposition~\ref{cor:FS},
we let $f(t) = t^{\gamma/2}$ and $\mu=\delta_0$, obtaining
\begin{align}
h(x) &:= \int_0^\infty f(s) g_s \mu(x)\,ds= \int_0^\infty \int_{\R^d} f(s) g_s(x-y) \mu(dy)\,ds \nonumber\\
 &= \int_{\R^d} 4^{-\gamma/2-1} \pi^{-d/2} |x-y|^{\gamma-d+2} \Gamma(d/2 - \gamma/2 - 1) \mu(dy) \nonumber\\
 &=  4^{-\gamma/2-1} \pi^{-d/2} |x|^{\gamma-d+2} \Gamma(d/2 - \gamma/2 - 1),\nonumber\\
 \int_0^\infty f'(s) g_s\mu(x) \,ds &= \frac{\gamma}{2} 4^{-\gamma/2} |x|^{\gamma-d} \pi^{-d/2} \Gamma(d/2 - \gamma/2),\nonumber\\
q(x) &= \frac{\int_0^\infty f'(s) g_s\mu(x) \,ds}{h(x)} = \frac{\gamma(d-2-\gamma)}{|x|^2}.\label{eq:dqL}
\end{align}
By Theorem~\ref{thm:Hardy} we get
\eqref{eq:W12n}. 
Since the quadratic form of the Gaussian semigroup is the classical Dirichlet integral, 
taking $\gamma=(d-2)/2$ and $q(x)=(d-2)^2/(4|x|^2)$ we recover the classical
Hardy inequality:
\begin{equation}\label{eq:W12eq}
 \int_{\R^d} |\nabla u(x)|^2\,dx 
\ge \frac{(d-2)^2}{4} \int_{\R^d}  \frac{u(x)^2}{|x|^2}\,dx,\qquad u\in L^2(\Rd,dx).
\end{equation}
We, however, desire \eqref{eq:cHg}. 
It is cumbersome to directly prove 
the convergence of \eqref{eq:W12n} to \eqref {eq:cHg}\footnote{But see a comment before \cite[(1.6)]{MR2984215} and our conclusion below.}.
Here is an approach based on calculus. For $u\in C_c^\infty(\Rd\setminus\{0\})$ we have
\begin{align*}
&\partial_j  \left(|x|^{d-2-\gamma}u(x)\right)
=
(d-2-\gamma) |x|^{d-4-\gamma}u(x)  x_j + |x|^{d-2-\gamma} u_j(x),\\
&\left|\nabla \left(|x|^{d-2-\gamma}u(x)\right)\right|^2
=
|x|^{2(d-4-\gamma)}   \big[(d-2-\gamma)^2 u(x)^2  |x|^2 +|x|^{4}  |\nabla u(x)|^2  \\
&\qquad\qquad\qquad\qquad\qquad\qquad\qquad\;\;
+ (d-2-\gamma) \left< \nabla (u^2)(x), x\right> |x|^2 \big],
\end{align*}
hence
\begin{align*}
\int_\Rd \left|\nabla\frac{u}{h}(x)\right|^2h(x)^2dx 
&=\int_\Rd  \left|\nabla \left(|x|^{d-2-\gamma}u(x)\right)\right|^2    |x|^{2(\gamma+2-d)}dx   \nonumber \\
&=(d-2-\gamma)^2 \int_\Rd   u(x)^2 |x|^{-2}dx 
 +   \int_\Rd |\nabla u(x)|^2 dx   \nonumber\\
 &\quad  +(d-2-\gamma) \int_\Rd \left< \nabla (u^2)(x), |x|^{-2}x\right> dx   . \label{e:still}
\end{align*}
Since ${\rm div}(|x|^{-2}x)=(d-2) |x|^{-2}$, the divergence theorem yields \eqref{eq:cHg}.
We then
extend \eqref{eq:cHg}
to $u\in C^\infty_c(\Rd)$ as follows.
Let $\psi\in C_c^\infty(\Rd)$ be such that $0\le \psi \le 1$, $\psi(x)=1$ if $|x|\le 1$, $\psi(x)=0$ if $|x|\ge 2$. Let $u_n(x)=u(x)[1-\psi(nx)]$, $n\in \N$. 
We note the local integrability of $|x|^{-2}$ in $\Rd$ with $d\ge 3$.
We let $n\to \infty$ and  
have \eqref{eq:cHg}
hold for $u$ by using the  convergence in $L^2(|x|^{-2}dx)$, 
inequality $|\nabla u_n(x)| \leq |\nabla u(x)| + c_1|u(x)| |x|^{-1}$,
  the identity $h(x)\nabla(u_n/h)(x)=\nabla u_n(x)-u_n(x)[\nabla h(x)]/h(x)$ for $x\neq 0$, the observation that $|\nabla h(x)|/h(x)\leq c|x|^{-2}$ and the dominated convergence theorem. 
We can now extend \eqref{eq:cHg}
to $u\in W^{1,2}(\Rd)$. Indeed, assume that $C^\infty_c(\Rd)\ni v_n\to u$ and 
$\nabla v_n\to g$ in $L^2(\Rd)$
 as $n\to \infty$, so that $g=\nabla u$ in the sense of distributions.
We have that  $h(x)\nabla(v_n/h)(x)=\nabla v_n(x)-v_n(x)[\nabla h(x)]/h(x)\to g-u [\nabla h(x)]/h(x)$ in 
$L^2(\Rd)$. The latter limit is $h\nabla(u/h)$, as we understand it. We obtain
the desired extension of  \eqref{eq:cHg}.
As a byproduct we actually see the convergence of the last term in \eqref{eq:W12n}.
Taking $\gamma=(d-2)/2$ in \eqref{eq:cHg} yields \eqref{eq:W12}.
\end{proof}
We note that \eqref{eq:W12eq} holds for all $u\in L^2(\Rd)$.

\begin{cor}\label{cor:aSpL}
If $0\le r\le d-2$, $x\in \Rd$ and $t>0$, then
$$\int_{\Rd} g_t(y-x)|y|^{-r}dy\le |x|^{-r}.$$
If $0< r< d-2$, $x\in \Rd$, $t>0$, $\beta=(d-2-r)/2$, $q$ is given by \eqref{eq:dqL}, and 
$\tilde g$ is the Schr\"odinger perturbation of $g$ by $q$ as in \eqref{eq:pti}, then
$$\int_{\Rd} \tilde g_t(y-x)|y|^{-r}dy\le |x|^{-r}.$$
\end{cor}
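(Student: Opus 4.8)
The plan is to mirror the proof of Corollary~\ref{cor:aSp}, replacing the subordinated stable semigroup by the Gaussian semigroup $g$ and the fractional-Laplacian computation by the Laplacian computation carried out in the proof of Proposition~\ref{cor:W12}. First I would establish the first inequality. Taking $\mu=\delta_0$ and $f(t)=t^{\gamma/2}$ with $\gamma=r$ (legitimate since $0\le r\le d-2$, so $-\infty<\gamma/2-1<d/2-1$ exactly when $r<d-2$, and the case $r=d-2$ is trivial, $h$ being constant, while $r=0$ gives $h$ the Newtonian kernel), the computation in the proof of Proposition~\ref{cor:W12} yields $h(x)=c_1|x|^{\gamma+2-d}=c_1|x|^{-r}$ up to a positive multiplicative constant. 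Then \eqref{eq:ph} from Section~\ref{sec:Sc}, which says $g_th\le h$, reads exactly $\int_{\Rd} g_t(y-x)|y|^{-r}\,dy\le |x|^{-r}$ after dividing out $c_1$. This disposes of the first display, including the endpoints $r=0$ and $r=d-2$.

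For the second inequality I would invoke Theorem~\ref{thm:perturbation}: with $q$ as in \eqref{eq:dqL}, namely $q(x)=\gamma(d-2-\gamma)|x|^{-2}$, and $\tilde g=\sum_{n\ge 0}g^{(n)}$ the Schr\"odinger perturbation of $g$ by $q$ defined through \eqref{eq:pti}--\eqref{eq:pn}, Theorem~\ref{thm:perturbation} gives $\int_{\Rd}\tilde g_t(x,y)h(y)\,dy\le h(x)$, i.e. $\int_{\Rd}\tilde g_t(y-x)|y|^{-r}\,dy\le |x|^{-r}$ once one notes that $g$ is translation invariant so $\tilde g$ is as well, and again cancels $c_1$. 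Here $\gamma=r$ is parametrized by $\beta=(d-2-r)/2$ in the sense that $f(t)=t^\beta$ was used with $\beta=\gamma/2$; one should check that this matches the $\beta$ in the statement, and I would phrase it so that $f(t)=t_+^\beta$, $\beta=(d-2-r)/2\in(0,(d-2)/2]$ when $0<r<d-2$, produces precisely the $h$ and $q$ above. Since $\tilde g\ge g$ by \eqref{eq:pti}, the second estimate indeed strengthens the first on the open range of $r$; as in Corollary~\ref{cor:aSp}, the endpoints $r=0$ and $r=d-2$ are excluded from the perturbed statement because there $q\equiv 0$ (as $\gamma(d-2-\gamma)=0$) and $\tilde g=g$, so nothing new is obtained.

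I do not expect a genuine obstacle here: the analytic content is entirely contained in Theorem~\ref{thm:perturbation} and in the elementary Gaussian integral \eqref{int:tbetagt} already evaluated in the proof of Proposition~\ref{cor:W12}. The only points requiring a word of care are the bookkeeping of the constant $c_1=4^{-\gamma/2-1}\pi^{-d/2}\Gamma(d/2-\gamma/2-1)$, which is harmless because it cancels on both sides, and the verification that $g$ is Markovian (indeed $\int_{\Rd}g_t(x-y)\,dy=1$) so that Theorem~\ref{thm:perturbation} applies; this is standard. Thus the proof is essentially a two-line invocation: \eqref{eq:ph} for the first bound, Theorem~\ref{thm:perturbation} for the second, with the explicit $h$ and $q$ imported from the proof of Proposition~\ref{cor:W12}.
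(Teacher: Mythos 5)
Your overall strategy is exactly the one the paper intends (the paper's entire ``proof'' is the sentence ``similar to Corollary~\ref{cor:aSp}''): take $\mu=\delta_0$, compute $h$ and $q$ as in the proof of Proposition~\ref{cor:W12}, then invoke \eqref{eq:ph} for the first display and Theorem~\ref{thm:perturbation} for the second. However, your parametrization contains a genuine arithmetic error. You set $\gamma=r$ and then assert $h(x)=c_1|x|^{\gamma+2-d}=c_1|x|^{-r}$, but with $\gamma=r$ you get $|x|^{\gamma+2-d}=|x|^{r+2-d}$, which equals $|x|^{-r}$ only when $r=(d-2)/2$. To obtain $h(x)=c_1|x|^{-r}$ you must take $\gamma=d-2-r$, and then $\beta=\gamma/2=(d-2-r)/2$ matches the statement's $\beta$ (whereas your $\gamma=r$ yields $\beta=r/2$). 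You even flag this yourself (``one should check that this matches the $\beta$ in the statement''), but the check would have revealed the inconsistency rather than confirmed it. The weight $q$ from \eqref{eq:dqL} happens to be the same for $\gamma$ and $d-2-\gamma$ because $\gamma(d-2-\gamma)$ is symmetric, so $\tilde g$ is unaffected; but the supermedian function is not, and as written your argument yields the estimate for $|y|^{-(d-2-r)}$ rather than for $|y|^{-r}$ as claimed.

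Two further consequences of the slip. Your endpoint discussion is reversed: with the correct $\gamma=d-2-r$ the constant-$h$ case is $r=0$ (where the integral \eqref{int:tbetagt} diverges, $\gamma/2=(d-2)/2\not< d/2-1$, but $h\equiv1$ is trivially supermedian because $g$ is a probability density), and the Newtonian case $h(x)=|x|^{2-d}$ is $r=d-2$, which is fine for $d\ge 3$. Also, $\beta=(d-2-r)/2\in(0,(d-2)/2)$ for $0<r<d-2$, an open interval on both ends, not closed at $(d-2)/2$. With the substitution $\gamma=d-2-r$ made throughout, the remainder of your argument is sound: the multiplicative constant cancels, translation invariance passes from $g$ to $\tilde g$, and Theorem~\ref{thm:perturbation} gives the second bound, which strengthens the first because $\tilde g\ge g$ by \eqref{eq:pti}. (Your remark about verifying that $g$ is Markovian is harmless but unnecessary for Theorem~\ref{thm:perturbation}, which sits in Section~\ref{sec:Sc} and requires only \eqref{eq:psymmetric}--\eqref{eq:ck}; the Markov assumption enters only in Section~\ref{sec:Hi}.)
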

The proof is similar to that of Corollary~\ref{cor:aSp} and is left to the reader.

\section{Applications to transition densities with global scaling}\label{s:asjm}
In this section we show how sharp estimates of transition densities satisfying certain scaling conditions imply Hardy inequalities. 
In particular we give Hardy inequalities for symmetric jump processes on metric measure space studied in  \cite{MR2357678}, and for unimodal L\'evy processes recently estimated in \cite{MR3165234}.
{In what follows we assume that $\phi:[0,\infty)\to [0,\infty)$ is nondecreasing and left-continuous, $\phi(0)=0$, $\phi(x)>0$ if $x>0$ and $\phi(\infty^-):=\lim_{x\to \infty}\phi(x)=\infty$.}
We denote, as usual,
\[
 \phi^{-1}(u) = \inf \{s>0: \phi(s)
{>}u\}, \qquad u
{\geq}0.
\]
{Here is a simple observation, which we give without proof.
\begin{lem}\label{lem:tp}
Let $r,t\ge 0$. We have $t\ge \phi(r)$ if and only if $\phi^{-1}(t)\ge r$.
\end{lem}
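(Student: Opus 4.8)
The plan is to read the claim as a Galois-type correspondence between the nondecreasing function $\phi$ and its generalized inverse $\phi^{-1}$, and to prove the two implications separately: monotonicity of $\phi$ delivers one direction, while left-continuity of $\phi$ delivers the other. First I would dispose of the trivial case $r=0$. Then $\phi(r)=\phi(0)=0\le t$ holds for every $t\ge 0$, while $\phi^{-1}(t)\ge 0$ also holds for every $t\ge 0$, since $\phi^{-1}(t)$ is the infimum of a subset of $(0,\infty)$ (with the convention $\inf\emptyset=+\infty$). Hence both sides of the asserted equivalence are automatically true. From now on I assume $r>0$, and I abbreviate $A_t:=\{s>0:\phi(s)>t\}$, so that $\phi^{-1}(t)=\inf A_t$.

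For the implication $t\ge\phi(r)\Rightarrow\phi^{-1}(t)\ge r$, I would check that $A_t\cap(0,r)=\emptyset$: if $0<s<r$, then by monotonicity $\phi(s)\le\phi(r)\le t$, so $s\notin A_t$. Thus every element of $A_t$ is at least $r$, and therefore $\phi^{-1}(t)=\inf A_t\ge r$. (Incidentally $\phi(\infty^-)=\infty$ forces $A_t\neq\emptyset$, but this fact is not needed.)

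For the converse $\phi^{-1}(t)\ge r\Rightarrow t\ge\phi(r)$, the hypothesis $\inf A_t\ge r$ means exactly that $\phi(s)\le t$ for every $s$ with $0<s<r$, because any such $s$ with $\phi(s)>t$ would lie in $A_t$ and yield $\phi^{-1}(t)\le s<r$, a contradiction. Since $r>0$, such points $s$ exist and may be chosen to increase to $r$, so left-continuity of $\phi$ at $r$ gives $\phi(r)=\lim_{s\uparrow r}\phi(s)\le t$, which is what we want.

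I expect the only place demanding care to be the use of left-continuity in the converse direction: it is precisely this hypothesis that upgrades the conclusion from the one-sided bound $t\ge\phi(r^-)$ to the full equivalence. The monotonicity direction, the $r=0$ case, and the (harmless) question of whether $A_t$ is empty are all routine.
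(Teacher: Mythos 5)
Your proof is correct. The paper explicitly omits a proof (``a simple observation, which we give without proof''), so there is nothing to compare against, but your argument is the natural one: monotonicity gives $t\ge\phi(r)\Rightarrow\phi^{-1}(t)\ge r$, and left-continuity of $\phi$ (passing $s\uparrow r$) gives the converse; the $r=0$ case is handled correctly by the conventions on $\phi(0)$ and $\inf\emptyset$.
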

}
{We see that $\phi^{-1}$ is upper semicontinuous, hence right-continuous, $\phi^{-1}(\infty^{-})=\infty$,
$\phi(\phi^{-1}(u))\leq u$ and $\phi^{-1}(\phi(s))\geq
s$ for $s,u\ge 0$.
If $\phi$ is continuous, then $\phi(\phi^{-1}(u))= u$, and if $\phi$ is strictly increasing, then 
$\phi^{-1}(\phi(s))= s$ for $s,u\ge 0$.
Both these conditions typically hold in our applications, and then 
$\phi^{-1}$ is the genuine inverse function.
}

We first recall, after \cite[Section 3]{MR3165234}, \cite[Section 2]{MR3131293} and  \cite[(2.7) and (2.20)]{MR2555291}, the following weak scaling conditions.
We say that a function  $\varphi: [0, \infty) \to [0, \infty)$ satisfies the global weak lower scaling condition if there are numbers
$\la \in \R $ and  $\lc \in (0,1] $,  such that
\begin{equation}\label{eq:LSC}
 \varphi(\lambda\theta)\, \ge\,
\lc \,\lambda^{\,\la} \varphi(\theta),\quad \quad \lambda\ge 1, \quad\theta > 0.
\end{equation}We then write $\varphi\in\WLSC(\la, {\lc})$.
Put differently, $\varphi(R)/\varphi(r)\ge \lc \left(R/r\right)^{\la}$, $0<r\le R$.
The global weak upper scaling condition holds if there are numbers $\ua \in \R$
and $\uc \in [1,\infty) $ such that
\begin{equation}\label{eq:USC}
 \varphi(\lambda\theta)\,\le\,
\uc\,\lambda^{\,\ua} \varphi(\theta),\quad\quad \lambda\ge 1, \quad\theta > 0, 
\end{equation}
or $\varphi(R)/\varphi(r)\le \uc \left(R/r\right)^{\ua}$, $0<r\le R$.
In short, $\varphi\in\WUSC(\ua, \uc)$.
We note that $\varphi$ has the lower scaling if and only if $\varphi(\theta)/\theta^\la$
is almost increasing, {i.e. comparable with a nondecreasing function} on $[0,\infty)$, and
$\varphi$ has the upper scaling if and only if $\varphi(\theta)/\theta^\ua$
is almost decreasing, 
see \cite[Lemma~11]{MR3165234}.

Let $(F, \rho, m)$  be a metric measure space with  metric $\rho$ and Radon measure $m$ with full support.
We denote $B(x,r)=\{y\in F: \rho(x,y)<r\}$ and assume that
there is a nondecreasing function $V:[0,\infty)\to [0,\infty)$ such that $V(0)=0$, 
$V(r)>0$ for $r>0$, and 
 \begin{equation}
c_1\, V(r) \leq m (B(x, r)) \leq c_2\, V(r) \quad  \text{for all } x\in F \text{ and  }r
{\geq}
0.
 \label{eqn:univdn}
\end{equation}
We call $V$ the volume function.

\begin{thm}\label{t:hardyphi1}
Let $p$ be a symmetric subprobability transition density on $F$, with Dirichlet form $\EEE$, and assume that
 \begin{align}\label{e:etd}
p_t( x,y)&\approx \frac 1{V(\phi ^{-1}(t))}\wedge
\frac{t}{V(\rho(x,y))\phi (\rho(x,y))},\quad t>0,\quad x, y \in F,
\end{align}
where {
$\phi , V: [0,\infty)\to (0,\infty)$ are non-decreasing, positive on $(0,\infty)$, $\phi(0)=V(0)=0$, $\phi(\infty^-)=\infty$}
 and
$V$ satisfies \eqref{eqn:univdn}. 
If $\lA > \ua>0$, $V\in  \WLSC(\lA, \lC )$ and $\phi \in  \WUSC(\ua, \uc)$,
then there is $C>0$ such that 
\begin{equation}\label{eq:hardyphi1}
 \EEE(u, u)  \ge  C 
   \int_{F} \frac{u(x)^2 }{\phi (\rho(x,y))}\,m(dx),
\qquad
y \in F,\quad u\in L^2(F,m).
\end{equation}
\end{thm}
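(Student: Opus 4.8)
The plan is to feed the two‑sided estimate \eqref{e:etd} into the abstract Hardy inequality through Corollary~\ref{rem:cppb}. Since $p$ is a symmetric subprobability transition density, the machinery of Sections~\ref{sec:Sc}--\ref{sec:Hi} is available. Fix $y\in F$ and take as comparison kernel $\bar p$ the right‑hand side of \eqref{e:etd}, the measure $\mu=\delta_y$, and the time weight $f(t)=t_+^\beta$ with $\beta\in(0,\lA/\ua-1)$; such $\beta$ exists precisely because $\lA>\ua>0$. Then $\bar p_s\mu(x)=\bar p_s(x,y)$, and, writing $r=\rho(x,y)$, I would first record that for $x\ne y$
\begin{equation*}
\bar p_t(x,y)=\frac{t}{V(r)\,\phi(r)}\ \ \text{if }0<t<\phi(r),\qquad \bar p_t(x,y)=\frac1{V(\phi^{-1}(t))}\ \ \text{if }t\ge\phi(r).
\end{equation*}
This follows from Lemma~\ref{lem:tp} and monotonicity of $V$: if $t<\phi(r)$ then $\phi^{-1}(t)<r$, so $V(\phi^{-1}(t))\le V(r)$ and $t/(V(r)\phi(r))<1/V(r)\le 1/V(\phi^{-1}(t))$; if $t\ge\phi(r)$ then $\phi^{-1}(t)\ge r$, so $V(\phi^{-1}(t))\ge V(r)$ and $1/V(\phi^{-1}(t))\le 1/V(r)\le t/(V(r)\phi(r))$.

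Next I would compute $\bar h(x)=\int_0^\infty t^\beta\bar p_t(x,y)\,dt$ and the numerator $\int_0^\infty \beta t^{\beta-1}\bar p_t(x,y)\,dt$ by splitting each integral at $t=\phi(r)$. The integrals over $(0,\phi(r))$ are elementary,
\begin{equation*}
\int_0^{\phi(r)}\frac{t^{\beta+1}}{V(r)\phi(r)}\,dt=\frac{\phi(r)^{\beta+1}}{(\beta+2)V(r)},\qquad \beta\int_0^{\phi(r)}\frac{t^{\beta}}{V(r)\phi(r)}\,dt=\frac{\beta\,\phi(r)^{\beta}}{(\beta+1)V(r)},
\end{equation*}
and already provide the lower bounds $\bar h(x)\ge\phi(r)^{\beta+1}/((\beta+2)V(r))$ and numerator $\ge\beta\phi(r)^{\beta}/((\beta+1)V(r))$. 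For the tails over $(\phi(r),\infty)$ the key point is to bound $\phi^{-1}$ from below using $\phi\in\WUSC(\ua,\uc)$: for $R\ge r$ we have $\phi(R)\le\uc(R/r)^{\ua}\phi(r)$, so for $\tau\ge\uc$ the choice $R=r(\tau/\uc)^{1/\ua}\ge r$ gives $\phi(R)\le\tau\phi(r)$, whence $\phi^{-1}(\tau\phi(r))\ge r(\tau/\uc)^{1/\ua}$ by Lemma~\ref{lem:tp}. Together with $V\in\WLSC(\lA,\lC)$ this yields $V(\phi^{-1}(\tau\phi(r)))\ge\lC(\tau/\uc)^{\lA/\ua}V(r)$ for $\tau\ge\uc$, while $V(\phi^{-1}(\tau\phi(r)))\ge V(r)$ for $1\le\tau\le\uc$ (again by Lemma~\ref{lem:tp}). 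Substituting $t=\tau\phi(r)$,
\begin{equation*}
\int_{\phi(r)}^\infty\frac{t^\beta}{V(\phi^{-1}(t))}\,dt\le\frac{\phi(r)^{\beta+1}}{V(r)}\left(\int_1^{\uc}\tau^\beta\,d\tau+\frac{\uc^{\lA/\ua}}{\lC}\int_{\uc}^\infty\tau^{\beta-\lA/\ua}\,d\tau\right),
\end{equation*}
the last integral converging exactly because $\beta<\lA/\ua-1$; the same bound with $\beta$ replaced by $\beta-1$ (convergence automatic) controls the tail of the numerator. Hence $\bar h(x)\approx\phi(r)^{\beta+1}/V(r)$ and the numerator $\approx\phi(r)^\beta/V(r)$, so
\begin{equation*}
\bar q(x)=\frac1{\bar h(x)}\int_0^\infty \beta t^{\beta-1}\bar p_t(x,y)\,dt\ \ge\ \frac{C_0}{\phi(\rho(x,y))},\qquad x\ne y,
\end{equation*}
with $C_0>0$ depending only on $\beta$, $\lA$, $\ua$, $\lC$, $\uc$ and the constants in \eqref{eqn:univdn}, in particular not on $y$.

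Finally I would conclude with Corollary~\ref{rem:cppb}. Since $\phi$ and $V$ are finite and strictly positive on $(0,\infty)$, the exceptional set $\{x:\bar h(x)=0\text{ or }\infty\}$ is contained in $\{y\}$, which is $m$‑null by \eqref{eqn:univdn} (as $V(0)=0$); thus the hypothesis of Corollary~\ref{rem:cppb} holds for every $u\in L^2(F,m)$, and \eqref{eq:Hardynew} together with $q\ge c^{-2}\bar q$ gives
\begin{equation*}
\EEE(u,u)\ \ge\ c^{-2}\int_F u(x)^2 q(x)\,m(dx)\ \ge\ c^{-4}\int_F u(x)^2\bar q(x)\,m(dx)\ \ge\ C\int_F\frac{u(x)^2}{\phi(\rho(x,y))}\,m(dx),
\end{equation*}
with $C=c^{-4}C_0>0$ independent of $y$ and $u$, which is \eqref{eq:hardyphi1}. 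I expect the main obstacle to be the middle paragraph: turning the qualitative hypothesis $\lA>\ua$ into the quantitative integrability of the $\phi^{-1}$‑tail, and thereby into the two‑sided bound $\bar h(x)\approx\phi(\rho(x,y))^{\beta+1}/V(\rho(x,y))$; once this is in hand, the Hardy inequality is a direct application of the abstract results of Sections~\ref{sec:Sc}--\ref{sec:Hi}.
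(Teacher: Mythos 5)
Your proof is correct and follows essentially the same route as the paper: take $\mu=\delta_y$, $f(t)=t_+^\beta$ with $0<\beta<\lA/\ua-1$, split the time integral at $t=\phi(\rho(x,y))$, use Lemma~\ref{lem:tp} together with $\phi\in\WUSC(\ua,\uc)$ and $V\in\WLSC(\lA,\lC)$ to bound the tail, and conclude $h(x)\approx\phi(\rho(x,y))^{\beta+1}/V(\rho(x,y))$, $k(x)\approx\phi(\rho(x,y))^{\beta}/V(\rho(x,y))$, hence $q\approx 1/\phi(\rho(\cdot,y))$. The only cosmetic difference is that you route the argument through Corollary~\ref{rem:cppb} with the explicit comparison kernel $\bar p$, whereas the paper works with $h$ and $k$ built directly from $p$ and invokes the Hardy equality \eqref{eq:Hardyv}; the choice of $\beta$, the change of variables, and the resulting bounds are the same.
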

\begin{proof} 
Let $y \in F$ and $u\in L^2(F,m)$. The constants in the estimates below are independent of $y$ and $u$.
Let $
0<\beta< \lA\ \!/\ua-1$ and define
\begin{equation}\label{e:defh}
h(x)= \int_0^{\infty} 
 t^\beta p_t(x, y)\, dt, \quad x \in F.
\end{equation}
We shall prove that 
 \begin{align}\label{eq:hardyphi11}
&\EEE(u, u)  \approx   
\int_{F} \frac{u(x)^2 }{\phi(\rho(x,y))}\,m(dx) \\
   &+
\liminf_{t\to 0} \int_F \int_F \frac{p_t(x,z)}{2t} \left(\frac{u(x)}{h(x)}-\frac{u(z)}{h(z)} \right)^2 h(z)h(x) m(dz)m(dx).  \nonumber
\end{align}
To this end, we first verify 
\begin{equation}\label{e:ah}
h(x) \approx  \phi (\rho(x, y))^{\beta+1} /V(\rho(x, y)), \qquad \rho(x, y)>0.
\end{equation}
Indeed, 
letting $r=\rho(x, y)>0$
 we 
{first note that Lemma~\ref{lem:tp} yields $t\ge \phi(r)$ equivalent to $tV(\phi^{-1}(t))\ge V(r)\phi(r)$, from whence}
\begin{align*}
 h(x) &\approx 
V(r)^{-1} \phi (r)^{-1} \int_{0}^{\phi (r)} t^{\beta+1}\,dt
+ \int_{\phi (r)}^\infty  \frac{t^\beta}{V(\phi ^{-1}(t))
}\,  dt \\
&=
(\beta+2)^{-1}   \phi (r)^{\beta+1}/V(r) + I.
\end{align*}
To estimate $I$, we observe that 
the assumption $\phi \in  \WUSC(\ua, \uc)$
implies $\phi^{{-1}} \in  \WLSC(1/\ua, \uc^{\ \! -1/\ua})$ \cite[Remark~4]{MR3165234}. 
If $r>0$ and $t\ge \phi(r)$, then
\begin{align*}
\frac{V(\phi^{-1}(t))}{V(r)}&\ge 
\frac{V(\phi^{-1}(t))}{V(\phi^{-1}(\phi(r)))}
\ge 
\lC \left(\frac{\phi^{-1}(t)}{\phi^{-1}(\phi(r))}
\right)^\lA\\
& \ge \lC\ \! \uc^{\ \!-\lA\ \!/\ua} \frac{t^{\ \! \lA\ \!/\ua}}{\phi(r)^{\ \!\lA\ \!/\ua}},
\end{align*}
hence,
\begin{align*}
\frac{t^\beta}{V(\phi ^{-1}(t))
} 
&\le \frac{\uc^{\ \!\lA\ \!/\ua}}{ \lC }  \frac{\phi ({r})^{ \lA\ \!/\ua}\ t^{\beta-\lA\ \!/\ua}}{V(r)}
. \end{align*}
The claim \eqref{e:ah} now follows because 
\begin{align}\label{e:77}
I &\leq 
\frac{\uc^{\ \!\lA\ \!/\ua}}{ \lC }  \frac{\phi ({r})^{ \lA\ \!/\ua}}{V(r)} 
\int_{\phi (r)}^\infty t^{\beta-\lA\ \!/\ua} \,  dt 
= \frac{\uc^{\ \!\lA\ \!/\ua} }{ \lC (\lA\ \!/\ua -1-\beta) } \, \frac{\phi ({r})^{\beta+1}}{ V(r)}.
\end{align}
The function
$$
k(x):= \int_0^{\infty} 
p_t(x,y) (t^\beta)' \,dt, \quad x\in F,
$$
also satisfies
$$
k(x) \approx  \phi (\rho(x, y))^{\beta} /V(\rho(x, y)), \quad x\in F.
$$
This follows by recalculating \eqref{e:ah} for $\beta-1$.
We get
\begin{equation}\label{e:dmp}
C_1 \phi (\rho(x, y))^{-1} \le q(x) :=   \frac{k(x)}{h(x)} \le C_2 \phi (\rho(x, y))^{-1},
\end{equation}
by choosing any 
$\beta \in (0, \lA\ \!/\ua-1)$. 
The theorem follows from \eqref{eq:Hardyv}.
\end{proof}

\begin{rem}\label{rem:ef}
 With the above notation, 
 for each $0<\beta< \lA\ \!/\ua-1$ there exists a constant $c$
such that for all $x,y\in F$ we have
$$\int p_t(x,z)\phi (\rho(z, y))^{\beta+1} /V(\rho(z, y))m(dz)\le c
\phi (\rho(x, y))^{\beta+1} /V(\rho(x, y))
$$
and 
$$
\int \tilde p_t(x,z)\phi (\rho(z, y))^{\beta+1} /V(\rho(z, y))m(dz)\le c
\phi (\rho(x, y))^{\beta+1} /V(\rho(x, y)),
$$
where $\tilde p$ is given by \eqref{eq:pti} with $q(x)=C_1 \phi (\rho(x, y))^{-1}$ on $F$ and
$C_1$
is the constant in the lower bound of the sharp estimate in \eqref{e:dmp}.
 This is a non-explosion result for $\tilde p$, and it is proved in the same way as Corollary~\ref{cor:aSp}.
\end{rem}
\begin{rem}
Interestingly, the Chapman-Kolmogorov equations and \eqref{e:etd} imply that $\phi$ in Theorem~\ref{t:hardyphi1} satisfies a lower scaling, too. We leave the proof of this fact to the interested reader because it is not used in the sequel.  An analogous situation occurs in \cite[Theorem26]{MR3165234}.
\end{rem}

In \cite{MR2357678} a wide class of transition densities are constructed on  
  locally compact separable metric measure spaces $(F, \rho, m)$ with
 metric $\rho$ and Radon measure $m$ of infinite mass and full support on $F$.
Here are some of the assumptions of \cite{MR2357678} (for details see \cite[Theorem 1.2]{MR2357678}).
The functions $\phi, V:[0,\infty)\to(0,\infty)$  are increasing, $\phi(0)=V(0)=0$, $\phi (1)=1$, 
$\phi \in  \WLSC(\la , \lc) \cap \WUSC(\ua, \uc)$,
$V\in  \WLSC(\lA, \lC ) \cap \WUSC(\uA, \uC)$, and
$$\int_0^r\frac {s}{\phi  (s)}\, ds \,\leq \,{c} \, \frac{ r^2}{\phi 
(r)} \quad  \hbox{for every } r>0.
$$
A symmetric measurable function $J(x, y)$ satisfying
\begin{align}\label{eqn:Je}
J(x, y) 
\approx \frac{1}{V(\rho (x, y)) \phi  (\rho (x, y))},\qquad x,y\in F,x\neq y,
\end{align}
is considered in \cite[Theorem~1.2]{MR2357678} along with a symmetric pure-jump Markov process having $J$ as  jump kernel and symmetric $p$ satisfying \eqref{e:etd} as transition density.
By Theorem \ref{t:hardyphi1}, we obtain the following result.

\begin{cor}\label{c:hardyphi1}
Under the assumptions of \cite[Theorem 1.2]{MR2357678}, \eqref{eq:hardyphi1} and \eqref{eq:hardyphi11} hold if $\lA > \ua$. 
\end{cor}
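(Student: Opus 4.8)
The plan is to verify that the hypotheses of \cite[Theorem~1.2]{MR2357678} are strong enough to trigger Theorem~\ref{t:hardyphi1}, and then simply quote that theorem. Concretely, under the assumptions of \cite[Theorem~1.2]{MR2357678} the process has a symmetric transition density $p$ satisfying the two-sided estimate \eqref{e:etd} with the given $\phi$ and $V$; moreover $p$ is a subprobability transition density (being the transition density of a Markov process), so it fits the framework of Section~\ref{sec:Hi}. The functions $\phi,V$ are by assumption increasing with $\phi(0)=V(0)=0$, $\phi(1)=1$, and $\phi(\infty^-)=\infty$ follows from $\phi\in\WLSC(\la,\lc)$ together with $\phi>0$ on $(0,\infty)$; likewise $V$ is positive on $(0,\infty)$. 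Finally $V$ satisfies the volume doubling bound \eqref{eqn:univdn}: this is part of the setup of \cite{MR2357678} (the measure of a ball is comparable to $V$ of its radius). Hence all structural hypotheses of Theorem~\ref{t:hardyphi1} hold.

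The only genuine scaling hypothesis needed in Theorem~\ref{t:hardyphi1} beyond \eqref{e:etd} and \eqref{eqn:univdn} is $\lA>\ua>0$ together with $V\in\WLSC(\lA,\lC)$ and $\phi\in\WUSC(\ua,\uc)$. The assumptions of \cite[Theorem~1.2]{MR2357678} already include $V\in\WLSC(\lA,\lC)$ and $\phi\in\WUSC(\ua,\uc)$; one has $\ua>0$ because $\phi$ is increasing with $\phi(0)=0$ (indeed $\phi(\infty^-)=\infty$ forces the upper exponent to be positive). Thus the sole extra assumption we must impose is $\lA>\ua$, which is exactly the hypothesis displayed in the Corollary. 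Therefore Theorem~\ref{t:hardyphi1} applies verbatim and yields \eqref{eq:hardyphi1}; the stronger conclusion \eqref{eq:hardyphi11} is part of the same theorem's proof (it is \eqref{eq:hardyphi11} itself), obtained there via \eqref{eq:Hardyv}.

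There is essentially no obstacle here: the corollary is a dictionary translation, and the only point requiring a line of care is matching the normalizations and making sure the weak scaling indices of $\cite{MR2357678}$ are precisely those required, in particular checking that $\ua$ can be taken positive and that the strict inequality $\lA>\ua$ is the operative restriction. For safety I would note explicitly that the jump kernel estimate \eqref{eqn:Je} plays no role in the argument — only \eqref{e:etd} for the transition density and the scaling of $\phi$ and $V$ enter — so the conclusion is really a statement about the heat kernel of the process of \cite[Theorem~1.2]{MR2357678}.

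\begin{proof}
Under the assumptions of \cite[Theorem~1.2]{MR2357678} there is a symmetric Markov process on $(F,\rho,m)$ whose transition density $p$ is symmetric, satisfies the two-sided estimate \eqref{e:etd}, and is a subprobability transition density. The functions $\phi,V:[0,\infty)\to(0,\infty)$ are increasing with $\phi(0)=V(0)=0$ and positive on $(0,\infty)$; since $\phi\in\WLSC(\la,\lc)$ with $\lc>0$ and $\phi>0$ on $(0,\infty)$, we have $\phi(\infty^-)=\infty$. The volume function $V$ satisfies \eqref{eqn:univdn} as part of the setup of \cite{MR2357678}. Moreover $\phi\in\WUSC(\ua,\uc)$ and $V\in\WLSC(\lA,\lC)$ are among the hypotheses of \cite[Theorem~1.2]{MR2357678}, and $\ua>0$ because $\phi$ is increasing with $\phi(0)=0$ and $\phi(\infty^-)=\infty$. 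Adding the hypothesis $\lA>\ua$, all assumptions of Theorem~\ref{t:hardyphi1} are met. Applying that theorem gives \eqref{eq:hardyphi1}, and \eqref{eq:hardyphi11} is established in its proof.
\end{proof}
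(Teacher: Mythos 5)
Your proof matches the paper's, which treats the corollary as an immediate application of Theorem~\ref{t:hardyphi1} once the assumptions of \cite[Theorem~1.2]{MR2357678} are seen to supply the heat-kernel estimate \eqref{e:etd}, the scaling conditions on $\phi$ and $V$, and a subprobability transition density. One small caveat on a side remark: deducing $\phi(\infty^-)=\infty$ from $\phi\in\WLSC(\la,\lc)$ alone requires $\la>0$ (otherwise the lower scaling is vacuous for increasing $\phi$); this is indeed part of the hypotheses of \cite{MR2357678}, but it should be cited rather than derived, and the same is true for $\ua>0$.
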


We now specialize to $F=\Rd$ equipped with the Lebesgue measure.
Let $\nu$ be an infinite  isotropic unimodal \emph{L\'evy measure} on $\Rd$ i.e. $\nu(dx)=\nu(|x|)dx$, where $(0,\infty)\ni r\mapsto \nu(r)$ is nonincreasing, and
$$
\nu(\Rd\setminus\{0\})=\infty \quad \text{and} \quad
\int_{\Rd\setminus\{0\}}(|x|^2\wedge 1) \ \nu(dx)<\infty.
$$
Let
\begin{equation}\label{LKfpsi}
\psi(\xi)=\int_{\R^d} \left(1- \cos \left<\xi,x\right>\right) \nu(dx).
\end{equation}
 Because of rotational symmetry,  $\psi$ depends only on
$|\xi|$, and we can write $\psi(r)=\psi(\xi)$ for $r=|\xi|$.
This $\psi$ 
is almost increasing \cite{MR3165234}, {namely $\pi^2\psi(r)\ge \psi^*(r):=\sup\{\psi(p): 0\le p\le r\}$.}  Let $0<\la \leq \ua < 2$.
If  $0 \not\equiv \psi\in\WLSC(\la, \lc)\cap \WUSC(\ua,\uc)$, then
the following defines a convolution semigroup of functions,
\begin{equation}\label{LKf}
p_t(x)=(2\pi)^{-d}\int_{\R^d} e^{i\left<\xi,
{x}\right>}e^{-t\psi(\xi)}d\xi,\quad t>0, x\in 
\Rd,
\end{equation}
and the next two estimates hold \cite[Theorem~21]{MR3165234}. 
\begin{align}
p_t(x)   &\approx
   \left[ \psi^{{-}}(1/t)\right]^{d}\wedge \frac{t\psi(1/|x|)}{|x|^d}  ,\qquad t>0,\; x\in \Rd,  \label{eqn:epnw} \\
 \nu (|x|)   &\approx \frac{\psi(1/|x|)}{|x|^d}
   ,\qquad  x\in \Rd. \label{eqn:epnwj}
\end{align} 
Here $\psi^{-}(u)=\inf\{s\ge 0: \psi^*(s)\ge u\}$, the left-continuous inverse of $\psi^*$.
The corresponding Dirichlet form is
\begin{align*}
  \EEE(u, u) &=(2\pi)^{d}\int_\Rd \hat{u}(\xi)\overline{\hat{v}(\xi)}\psi(\xi)\,d\xi=
\frac12
\int_{\R^d} \int_{\R^d} (u(x)-u(y))^2 \nu(y-x)\,dy\,dx\\
&\approx \int_{\R^d} \int_{\R^d} (u(x)-u(y))^2 \frac{\psi(|x-y|^{-1})}{|y-x|^d} \,dy\,dx,
\end{align*}
cf. \cite[Example~1.4.1]{MR2778606} and the special case discussed in the proof of Proposition~\ref{cor:FS} above.
\begin{cor}\label{cor:hardyphi}
If $d > \ua$, then is $c >0$ such that for all $u\in L^2(\Rd)$
\begin{align}\label{eq:Hu}
\EEE(u, u) & \ge  c  \int_{\R^d} u(x)^2 \psi(1/|x|)\,dx.
\end{align}
\end{cor}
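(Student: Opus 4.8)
The plan is to obtain Corollary~\ref{cor:hardyphi} as the special case of Theorem~\ref{t:hardyphi1} in which $F=\Rd$, $\rho(x,y)=|x-y|$ and $m(dx)=dx$. Writing $p_t(x,y):=p_t(x-y)$ for the convolution semigroup \eqref{LKf}, this is a symmetric subprobability transition density, because $\int_\Rd p_t(x)\,dx=e^{-t\psi(0)}=1$, and its Dirichlet form is the form $\EEE$ recalled above the statement. The two functions I intend to feed into Theorem~\ref{t:hardyphi1} are $V(r)=r^d$ and
\[
\phi(r)=\frac{1}{\psi^*(1/r)}\quad(r>0),\qquad \phi(0)=0,
\]
where $\psi^*(s)=\sup_{0\le p\le s}\psi(p)$ is the non-decreasing majorant of $\psi$ already occurring in \eqref{eqn:epnw}. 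Then $\phi$ is non-decreasing and continuous; both $V$ and $\phi$ vanish at $0$, are finite and positive on $(0,\infty)$, and tend to $\infty$ at $\infty$: one uses here that $\psi$ is finite and continuous with $\psi(0)=0$ (by the L\'evy integrability condition), and that $\psi(\infty^-)=\infty$ because $\psi\in\WLSC(\la,\lc)$ with $\la>0$.

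Next I would check the hypotheses of Theorem~\ref{t:hardyphi1}. The volume condition \eqref{eqn:univdn} holds with this $V$, since the Lebesgue measure of a Euclidean ball of radius $r$ is a fixed multiple of $r^d$. For the two-sided estimate \eqref{e:etd}: by \eqref{eqn:epnw}, $p_t(x-y)\approx [\psi^{-}(1/t)]^d\wedge \frac{t\,\psi(1/|x-y|)}{|x-y|^d}$; since $\psi\le\psi^*\le\pi^2\psi$, the second term is comparable to $\frac{t}{V(|x-y|)\,\phi(|x-y|)}$, while the identity $\phi^{-1}(t)=1/\psi^{-}(1/t)$ — which follows from the definitions of the generalized inverses, with Lemma~\ref{lem:tp} as the model computation applied to the non-decreasing function $s\mapsto\psi^*(s)$ — turns the first term into $1/V(\phi^{-1}(t))$; thus \eqref{e:etd} holds. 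For the scaling hypotheses, $V(r)=r^d\in\WLSC(d,1)$, so one takes $\lA=d$ there; and $\psi\in\WUSC(\ua,\uc)$ together with $\psi\le\psi^*\le\pi^2\psi$ gives $\psi^*\in\WUSC(\ua,\pi^2\uc)$, hence $\phi\in\WUSC(\ua,\pi^2\uc)$ after the substitution $r\mapsto 1/r$. Finally $\lA=d>\ua$ is the hypothesis of the corollary, and $\ua\ge\la>0$, so indeed $\lA>\ua>0$.

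With the hypotheses in place, Theorem~\ref{t:hardyphi1} yields a constant $C>0$, independent of $y$, such that $\EEE(u,u)\ge C\int_\Rd u(x)^2/\phi(|x-y|)\,dx$ for every $y\in\Rd$ and $u\in L^2(\Rd)$; choosing $y=0$ and using $1/\phi(|x|)=\psi^*(1/|x|)\ge\psi(1/|x|)$ gives \eqref{eq:Hu} with $c=C$. Since the corollary is essentially a transcription of Theorem~\ref{t:hardyphi1} into the language of unimodal L\'evy processes, there is no deep obstacle; the one point requiring care is the bookkeeping among $\psi$, its monotone envelope $\psi^*$, the left-continuous inverse $\psi^{-}$ and the generalized inverse $\phi^{-1}$ — in particular verifying $\phi^{-1}(t)=1/\psi^{-}(1/t)$ and that each ``almost monotone'' comparison costs only a universal constant, so that the $C$ produced by Theorem~\ref{t:hardyphi1} is genuinely uniform in $y$.
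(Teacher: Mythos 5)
Your proposal is correct and follows essentially the same route as the paper: both deduce the corollary from Theorem~\ref{t:hardyphi1} with $\rho(x,y)=|x-y|$, $V(r)=r^d$, and $\phi(r)$ the reciprocal of $\psi$ (or its monotone envelope $\psi^*$) evaluated at $1/r$, using \eqref{eqn:epnw} to verify \eqref{e:etd} and the identity $\phi^{-1}(t)=1/\psi^{-}(1/t)$. The paper additionally upgrades \eqref{eq:Hu} to a Hardy \emph{equality} via \eqref{eq:remains1}, but that goes beyond the stated corollary.
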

\begin{proof}
Let $0 <\beta < (d /\ua)-1$,
$h(x)=\int_0^{\infty} 
 t^\beta p_t(x)\, dt$, and  $k(x)=\int_0^{\infty} 
 (t^\beta)' p_t(x)\, dt$. 
Considering $\rho(x,y)=|y-x|$, $\phi(r)=1/\psi(1/r)$ and $V(r)=r^d$,
by \eqref{eq:hardyphi1} we get \eqref{eq:Hu} for all $u\in L^2(\Rd)$.
To add some detail, we note that $\phi$ satisfies the same scalings as $\psi^*$ and 
$\phi^{-1}(t)=1/\psi^{-}(t^{-1})$. Thus
\eqref{e:ah} yields $h(x) \approx  \psi(|x|^{-1})^{-\beta-1} |x|^{-d}$ and  $k(x) \approx  \psi(|x|^{-1})^{-\beta} |x|^{-d}$.
In fact, we actually obtain Hardy equality.
Indeed,
\begin{align}
\liminf_{t\to 0} & \int_{\R^d} \int_{\R^d} \frac{p_t(x,y)}{2t} \left[\frac{u(x)}{ h(x)}-\frac{u(y)}{ h(y)} \right]^2  h(y) h(x)dydx \nonumber\\
&=\frac12 \int_{\R^d}\!\int_{\R^d} \left[\frac{u(x)}{ h(x)}-\frac{u(y)}{ h(y)}\right]^2  h(y) h(x)\nu(|x-y|) \,dy\,dx\,,\label{eq:remains1}
\end{align}
because if $t\to 0$, then
 $p_t(x,y)/t \leq c\nu (|x-y|)$ by \eqref{eqn:epnw} and \eqref{eqn:epnwj}, and $p_t(x,y)/t \to \nu (|x-y|)$ (weak convergence of radially monotone functions implies  convergence almost everywhere), and we can use 
the dominated convergence theorem or Fatou's lemma, as in the proof of Proposition~\ref{cor:FS}. 
We thus have a strengthening of \eqref{eq:Hu} for every 
$u\in L^2(\Rd)$:
\begin{align}
\EEE(u, u) =&\int_{\R^d} u(x)^2 \frac{k(x)}{h(x)}\,dx\nonumber\\
&+\frac12 \int_{\R^d}\!\int_{\R^d} \left[\frac{u(x)}{ h(x)}-\frac{u(y)}{ h(y)}\right]^2  h(y) h(x)\nu(|x-y|) \,dy\,dx.
\end{align}
\end{proof}
For instance if we take $\psi(\xi)=|\xi|\sqrt{\log(1+|\xi|)}$, the L\'evy-Kchintchine exponent of a subordinated Brownian motion \cite{MR2978140}, then we obtain 
\begin{equation*}
\int_\Rd |\hat{u}(\xi)|^2 |\xi|\sqrt{\log(1+|\xi|)}d \xi \ge c \int_\Rd \frac{u(x)^2\sqrt{\log(1+|x|^{-1})}}{|x|}dx,\quad u\in L^2(\Rd).
\end{equation*} 
\begin{rem}
We note that
\cite[Theorem 1, the ``thin'' case (T)]{DydaVahakangas} gives \eqref{eq:Hu} for continuous functions $u$ of compact support in $\Rd$. Here we 
extend the result to
all functions $u\in L^2(\Rd)$, as typical for our approach. 
We note in passing that \cite[Theorem~1, Theorem~5]{DydaVahakangas} offers a general framework for Hardy inequalities without the remainder terms and applications for quadratic forms on Euclidean spaces.
\end{rem}

Here is an analogue of Remark~\ref{rem:ef}.
\begin{rem}\label{rem:efL}
Using the notation above, 
for every $0 <\beta < (d -\ua)/\ua$, there exist constants $c_1$, $c_2$
such that  
$$\int p_t(y-x)
\psi(|y|^{-1})^{-\beta-1} |y|^{-d}dy
\le c_1 \psi(|x|^{-1})^{-\beta-1} |x|^{-d}, \quad x\in \Rd,
$$
and 
$$
\int \tilde p_t(x,z)dy)\psi(|y|^{-1})^{-\beta-1} |y|^{-d}dy
\le c_1 \psi(|x|^{-1})^{-\beta-1} |x|^{-d}, \quad x\in \Rd,
$$
where  $\tilde p$ is given by \eqref{eq:pti} with $q(x)=c_2\psi(1/|x|)$ on $\Rd$.
The result is proved as Remark~\ref{rem:ef}. In particular we obtain non-explosion of Schr\"odinger perturbations of such unimodal transition densities with $q(x)=c_2 \psi(1/|x|)$. Naturally, the largest valid $c_2$ is of further interest.
\end{rem}

\section{
Weak local scaling on Euclidean spaces}\label{sec:wls}
In this section we restrict ourselves to the Euclidean space and 
apply Theorem~\ref{thm:Hardy}
to a large class of symmetric jump processes 
satisfying
two-sided heat kernel estimates given
in \cite{MR2806700} and \cite{MR3165234}.
Let $\phi: \R_+\to \R_+$
be a strictly increasing continuous
function such that  $\phi (0)=0$, $\phi(1)=1$, {and}
$$
\lc  \Big(\frac Rr\Big)^{\la } \, \leq \,
\frac{\phi  (R)}{\phi  (r)}  \ \leq \ \uc  \Big(\frac
Rr\Big)^{\ua }
\quad \hbox{for every } 0<r<R \le 1.
$$
Let $J$ be a symmetric measurable function on $\R^d\times \R^d \cap \{x\neq y\}$ 
and let $\kappa_1, \kappa_2$  be positive constants such that
\begin{align}\label{e:bm1}
\frac{\kappa_1^{-1}}{|x-y|^d \phi  (|x-y|)} \le J(x, y) \le  \frac{\kappa_1}{|x-y|^d \phi  (|x-y|)}, \quad |x-y| \le 1,
\end{align}
and
\begin{align}\label{e:bm2}
 \sup_{x\in \R^d} \int_{\{y\in \R^d: |x-y| > 1\}} J(x, y) dy =:\kappa_2 <\infty.
\end{align}
We consider the quadratic form 
$$\EEE(u,u)=\frac12\int_\Rd\int_\Rd [u(y)-u(x)]^2J(x,y)dydx, \qquad u\in L^2(\Rd,dx),$$
with the Lebesgue measure $dx$ as the reference measure, for the symmetric pure-jump Markov processes on $\R^d$ constructed in \cite{MR2524930} from the jump kernel $J(x, y)$.
\begin{thm}\label{cor:hardyphi2}
If $d \ge 3$, then 
\begin{equation}\label{eq:hardyphi2}
 \EEE(u, u) 
 \ge 
 c  \int_{\R^d} u(x)^2 \frac{dx}{\phi (|x|) \vee |x|^2},
\qquad
u\in L^2 (\R^d).
\end{equation}
\end{thm}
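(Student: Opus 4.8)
The plan is to derive \eqref{eq:hardyphi2} from Theorem~\ref{thm:Hardy} (its inequality part \eqref{eq:Hardy}) exactly as in the proofs of Theorem~\ref{t:hardyphi1} and Corollary~\ref{cor:hardyphi}, by choosing $f(t)=t^\beta$ for a suitable $\beta>0$, $\mu=\delta_y$ (in fact we only need $y=0$ here, since the weight in \eqref{eq:hardyphi2} is centered at the origin), and then estimating the resulting supermedian function $h$ and the Fitzsimmons ratio $q=k/h$ from below. The new feature compared with Section~\ref{s:asjm} is that the scaling of $\phi$ and the two-sided bound on $J$ are only assumed \emph{locally}, i.e.\ for $|x-y|\le 1$, with the large-jump part controlled crudely by \eqref{e:bm2}; this is why the weight degenerates from $1/\phi(|x|)$ near the origin to the Hardy weight $1/|x|^2$ of the Laplacian for $|x|\gtrsim 1$. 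Accordingly $h$ will have two regimes, and the main work is to produce the sharp two-sided estimate of $h$ in each.

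First I would record the heat kernel estimate available in this setting: by \cite{MR2806700} (see also \cite{MR3165234}) the transition density $p$ associated with $J$ satisfies, for $t\in(0,1]$, a bound of the form $p_t(x,y)\approx V_\phi(t)^{-1}\wedge \big(t/(|x-y|^d\phi(|x-y|))\big)$ for $|x-y|\le 1$, where $\phi^{-1}$ is the inverse of $\phi$ and $V_\phi(t)=\phi^{-1}(t)^d$, together with the estimate $p_t(x,y)\approx t\,|x-y|^{-d-2}$-type decay (more precisely $p_t(x,y)\le c\,t\exp(-c|x-y|)$ or the analogous expression from \cite{MR2806700}) for $|x-y|>1$, and for $t\ge 1$ a Gaussian-type global bound $p_t(x,y)\approx t^{-d/2}\wedge (t\,J(x,y))$ reflecting that the process behaves like Brownian motion at large scales. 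The point is that at large times/large distances the kernel is comparable to the Gaussian one, which is why the $|x|^{-2}$ Hardy weight and the hypothesis $d\ge 3$ enter. Using Corollary~\ref{rem:cppb} I may replace $p$ by any comparable kernel, so I would work with a convenient explicit majorant/minorant built from $\phi^{-1}$ and the Gaussian kernel $g$.

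With $h(x)=\int_0^\infty t^\beta p_t(x,0)\,dt$ and $k(x)=\int_0^\infty \beta t^{\beta-1}p_t(x,0)\,dt$, the core computation is to show that for small $r=|x|$ one has $h(x)\approx \phi(r)^{\beta+1}/r^d$ and $k(x)\approx \phi(r)^{\beta}/r^d$, hence $q(x)=k(x)/h(x)\approx 1/\phi(r)$, by splitting $\int_0^\infty = \int_0^{\phi(r)} + \int_{\phi(r)}^{c} + \int_c^\infty$ and using the local weak scaling of $\phi$ (as in \eqref{e:ah}, \eqref{e:77}), together with $\beta\in(0,d/\ua-1)$ to make the middle integral converge; the tail $\int_c^\infty$ contributes a bounded amount and is negligible relative to $\phi(r)^{\beta+1}/r^d$ as $r\to0$. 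For $r=|x|\gtrsim 1$, the integral is dominated by the Gaussian regime and, exactly as in the computation \eqref{int:tbetagt} behind Proposition~\ref{cor:W12}, yields $h(x)\approx |x|^{2(\beta+1)-d}$ up to lower-order (exponentially small) corrections from the small-time part, while $k(x)\approx |x|^{2\beta-d}$, so $q(x)\approx |x|^{-2}$ there; this is where $d\ge 3$ is needed so that the relevant Gamma-function arguments $d/2-\delta-1$ are positive and $h$ is finite and positive. Patching the two regimes and using $\phi(1)=1$ gives $q(x)\approx 1/(\phi(|x|)\vee|x|^2)$ on all of $\Rd$. Then \eqref{eq:hardyphi2} follows from the inequality \eqref{eq:Hardy} of Theorem~\ref{thm:Hardy} (dropping the nonnegative $J_t$-remainder), after checking that $u=0$ on $\{h=0\text{ or }\infty\}$ is vacuous since $0<h<\infty$ on $\Rd\setminus\{0\}$ and the single point $\{0\}$ is $m$-null.

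The main obstacle I anticipate is controlling the intermediate-to-large time behavior of $h$ precisely enough: the estimate \eqref{e:etd}-type bound used in Section~\ref{s:asjm} is global, but here it holds only for $|x-y|\le 1$, so I must invoke the full form of the two-sided heat kernel estimates of \cite{MR2806700} for $t\ge 1$ and for $|x-y|>1$, and verify that the crossover between the ``$\phi$-regime'' ($t\le\phi(r)\le 1$) and the ``Gaussian regime'' is handled without losing the sharp comparability of $h$ — in particular that the contribution of large $t$ does not overwhelm $\phi(r)^{\beta+1}/r^d$ when $r$ is small, nor fall below $|x|^{2(\beta+1)-d}$ when $r$ is large. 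A secondary technical point is confirming that $p$ is a symmetric subprobability transition density on $L^2(\Rd,dx)$ so that the quadratic form $\EEE$ and Theorem~\ref{thm:Hardy} apply; this is guaranteed by the construction in \cite{MR2524930} and the conservativeness/Markov property established there, and by the comparison $\EEE(u,u)=\frac12\iint[u(y)-u(x)]^2J(x,y)\,dy\,dx$ identifying $\EEE$ with the given jump form.
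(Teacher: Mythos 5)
Your overall strategy matches the paper's proof closely: take $f(t)=t^\beta$ and $\mu=\delta_0$, estimate $h$ and $k$ in the two regimes $|x|\le 1$ (where $\phi$ dominates) and $|x|\ge 1$ (Gaussian), conclude $q(x)\approx 1/(\phi(|x|)\vee|x|^2)$, and invoke Theorem~\ref{thm:Hardy}. Your regime-by-regime asymptotics for $h$ and $k$ are also what the paper obtains (with the implicit observation, valid since $\ua<2$, that $\phi(r)\gtrsim r^{\ua}\gg r^2$ for small $r$, so the two descriptions of $h$ near the origin agree).

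The one genuine gap is the treatment of the large jumps. You apply the heat kernel estimates of \cite{MR2806700} directly to the transition density of the process with the full jump kernel $J$, and you expect Gaussian/exponential tail decay in $|x-y|$. But those estimates (including the bound of the form $p_t(x,y)\le c\,t\,\exp(-c|x-y|)$ that you record) pertain to the process with the \emph{truncated} kernel $J(x,y)\mathbf{1}_{\{|x-y|\le 1\}}$; for the untruncated process, which has jumps of all sizes with only the crude integrability \eqref{e:bm2}, the transition density need not have such a tail, and the computation of $h$ and $k$ would not go through as stated. The paper resolves this by explicitly passing to the truncated Dirichlet form $\mathcal{Q}$ and the associated transition density $p_t$ (the one covered by \cite[Theorem~1.4]{MR2806700}), proving $\mathcal{Q}(u,u)\ge c\int u^2 q$ via Theorem~\ref{thm:Hardy}, and then concluding by the trivial monotonicity $\EEE\ge\mathcal{Q}$, since dropping the $|x-y|>1$ part of $J$ only removes a nonnegative term from the form. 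Your argument needs this truncation step. A smaller slip: your initial restriction $\beta\in(0,d/\ua-1)$ is not tight enough for the Gaussian-regime integral $\int_1^\infty t^{\beta-d/2}e^{-cr^2/t}\,dt$ to converge; you later correctly note that $\beta<d/2-1$ is needed, and since $\ua<2$ gives $d/2-1<d/\ua-1$, the constraint $\beta<d/2-1$ is the binding one (the paper uses $0<\beta<d/2-1$).
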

\begin{proof}
Let
  ${\mathcal{Q}}$ and $p_t(x,y)$ be the quadratic form and the transition density corresponding to 
the symmetric pure-jump Markov process in $\R^d$ with the
jump kernel 
$J(x, y){\bf 1}_{\{ |x-y| \le 1\}}$ instead of $J(x, y)$, cf.  \cite[Theorem 1.4]{MR2806700}.
Thus,
$$
 {\mathcal{Q}} (u, u)=\frac12
\int_{\R^d\times \R^d} (u(x)-u(y))^2 J(x, y){\bf 1}_{\{ |x-y| \le 1\}} dx dy, \qquad u\in C_c(\R^d).
$$
We define $h$ as 
$
h(x)= \int_0^{\infty} 
p_t(x, 0) t^\beta\, dt$,  $x \in \R^d$
where $
{-1}<\beta < d/2-1$. We note that
for every $T, M\ge 0$,
\begin{align}\label{Gau}
 \int_{T}^{\infty}
 t^{\beta-\frac{d}2} e^{-\frac{Mr^2}{t}} dt
={r^{2\beta-d+2}}  \int_0^{\frac{r^2}{T}}
u^{-2-\beta+\frac{d}2}
 e^{-M u}du.
\end{align}
We shall use \cite[Theorem 1.4]{MR2806700}. We however note that 
the term $\log \frac{|x-y|}{t}$ in the statement of \cite[Theorem 1.4]{MR2806700} should be replaced by $1 + \log_+ \frac{|x-y|}{t}$, to include the case $c^{-1} t \le |x-y| \le c t$ missed in the considerations in \cite{MR2806700}.
With this correction, our arguments are as follows.
When $r =|x|\le 1$,
we have 
\begin{align}\label{e:newq1}
c_0^{-1}\left(\frac 1{\phi^{-1}(t)^d}\wedge \frac{t}{r^d\phi(r)}\right)
\le p_t(x, 0) \le c_0\left(\frac 1{\phi^{-1}(t)^d}\wedge \frac{t}{r^d\phi(r)}\right),  \quad t \in
(0,1]\end{align}
and 
\begin{align}\label{e:newq2} p_t(x, 0)
 \le c_0 \, t^{-d /2}e^{ -\uc
\big( \big(r \, (\log_+ (\frac{r}{t}) +1) \big) \wedge \frac{r^2}t \big)}
 \le c_0 \, t^{-d /2}e^{ -\uc  \frac{r^2}t }, \quad t >1.
\end{align}
Thus, by \eqref{e:newq1}, Lemma \ref{lem:tp}, \eqref{e:newq2},  \eqref{e:77}, \eqref{e:bm1} and \eqref{Gau},
\begin{eqnarray*}
&&c_{3}\frac{\phi (r)^{\beta+1}}{r^d} \le c_0^{-1}
\int_0^{ \phi (r)} \frac{t^{\beta+1}}{r^d\phi (r)}dt  \le h(x)\\
&\le&
 c_0\int_0^{ \phi (r)} \frac{t^{\beta+1}}{r^d\phi (r)}dt + c_0 \int_{ \phi (r)}^{1} \frac {t^{\beta}}{(\phi ^{-1}(t))^d} dt+ c_0 \int_1^{\infty}  t^{\beta-\frac{d}2} e^{-\frac{\uc r^2}{t}} dt\\
&\le& c_4\frac{\phi (r)^{\beta+1}}{r^d}
+ \frac{c_5}{r^{d-2-2\beta}}  \int_0^{\infty}
u^{-2-\beta+d/2}
e^{-\uc  u}du \\
 &\le& c_6
\left({\phi (r)^{1+\beta}}+{r^{2+2\beta}} \right) r^{-d}.
\end{eqnarray*}
If  $r=|x| >  1$, then by \cite[Theorem 1.4]{MR2806700}, 
we have 
\begin{align}\label{e:newq3}
c_0^{-1}  e^{-\lc r \,  (\log_+ (\frac{r}{t}) +1)  }
\le p_t(x, 0)\le c_0  e^{-\uc r \,  (\log_+ (\frac{r}{t}) +1) },  \quad t \in
(0,1],\end{align}
and for $t >1$ we have
\begin{align}\label{e:newq4} c_0^{-1} \, e^{ -\lc  \big( \big(r \,  (\log_+ (\frac{r}{t}) +1)  \big) \wedge \frac{r^2}t \big)} \le 
p_t(x, 0)/t^{-d /2}
 \le c_0 \, e^{ -\uc  \big( \big(r \,  (\log_+ (\frac{r}{t}) +1)  \big) \wedge \frac{r^2}t \big)}.
\end{align}
In particular, 
\begin{align}\label{e:newq5}  
p_t(x, 0)
 \ge c_0^{-1} \, t^{-d /2}e^{ -\lc  \big( \big(r \,  (\log_+ (\frac{r}{t}) +1)  \big) \wedge \frac{r^2}t \big)}
 \ge c_7 t^{-d /2}, \quad t >r^2.
\end{align}
Then \eqref{e:newq3}, \eqref{e:newq4},  \eqref{e:newq5},
\eqref{e:bm1} and \eqref{Gau} give
\begin{eqnarray*}
&&\frac{c_{7}}{r^{d-2-2\beta}}  \int_0^{1}
u^{-2-\beta+d/2}
du = c_{7} \int_{r^2}^{\infty}t^{\beta-\frac{d}2}  dt
 \le h(x)\\
&\le& c_{8}\int_0^{r} t^{\beta} e^{-c_{9}r}     dt
+
c_{8} \int_{r}^{\infty}  t^{\beta-\frac{d}2} e^{-\frac{c_{10} r^2}{t}} dt 
\\
 &=&   c_{8} (\beta+1)^{-1}r^{\beta+1} e^{-c_{9} r} + \frac{c_{8}}{r^{d-2-2\beta}}   \int_0^{r}
u^{-2-\beta+d/2}e^{-c_{10} u}du\\
 &\le&  \frac{c_{11}}{r^{d-2-2\beta}}.
\end{eqnarray*}
Thus,
\[
h(x) \approx  (\phi (|x|) \vee |x|^2)^{\beta+1}  |x|^{-d}, \quad x\in \R^d.
\]
In particular, if $
{0}<\beta <d/2-1$, and
$k(x)= \int_0^{\infty} 
p_t(x,0) (t^\beta)' \,dt$,
then
\[
k(x) \approx  (\phi (|x|) \vee |x|^2)^{\beta}|x|^{-d}, \quad x\in \R^d.
\]
Therefore,
\[
q(x) :=   \frac{k(x)}{h(x)} \approx  \frac1{\phi (|x|) \vee |x|^2}.
\]
Theorem~\ref{thm:Hardy} 
yields
\begin{equation}\label{eq:hardyphi2p}
\EEE(u, u)  \ge {\mathcal{Q}}(u, u) \ge 
 c_{12}  \int_{\R^d} u(x)^2 \frac{dx}{\phi (|x|) \vee |x|^2},
\quad
u\in L^2(\R^d).
\end{equation}
\end{proof}
\begin{rem}\label{rem:efLl}
As in Remark~\ref{rem:efL} we obtain non-explosion for Schr\"odinger perturbations by $q(x)=
c/[\phi (|x|) \vee |x|^2]$.
\end{rem}
\begin{rem}
The arguments and conclusions of Theorem~\ref{cor:hardyphi2} are valid for the unimodal L\'evy processes, in particular for the subordinated Brownian motions, provided their L\'evy-Khintchine exponent $\psi$ satisfies the assumptions of local scaling conditions at infinity with exponents strictly {between $0$ and $2<d$} made in \cite[Theorem~21]{MR3165234}:
\begin{align*}\label{eq:Hu1e}
\int_\Rd |\hat{u}(\xi)|^2 \psi(\xi) d \xi &\ge c  \int_{\R^d} u(x)^2 \left[\psi\left(\frac1{|x|}\right)\wedge \frac1{|x|^2}\right]\,dx, \qquad u\in L^2(\Rd).
\end{align*}

\end{rem}


\begin{thebibliography}{10}

\bibitem{MR856511}
A.~Ancona.
\newblock On strong barriers and an inequality of {H}ardy for domains in {${\bf
  R}\sp n$}.
\newblock {\em J. London Math. Soc. (2)}, 34(2):274--290, 1986.

\bibitem{MR742415}
P.~Baras and J.~A. Goldstein.
\newblock The heat equation with a singular potential.
\newblock {\em Trans. Amer. Math. Soc.}, 284(1):121--139, 1984.

\bibitem{MR2984215}
W.~Beckner.
\newblock Pitt's inequality and the fractional {L}aplacian: sharp error
  estimates.
\newblock {\em Forum Math.}, 24(1):177--209, 2012.

\bibitem{MR2569321}
K.~Bogdan, T.~Byczkowski, T.~Kulczycki, M.~Ryznar, R.~Song, and
  Z.~Vondra{\v{c}}ek.
\newblock {\em Potential analysis of stable processes and its extensions},
  volume 1980 of {\em Lecture Notes in Mathematics}.
\newblock Springer-Verlag, Berlin, 2009.
\newblock Edited by Piotr Graczyk and Andrzej Stos.

\bibitem{MR2663757}
K.~Bogdan and B.~Dyda.
\newblock The best constant in a fractional {H}ardy inequality.
\newblock {\em Math. Nachr.}, 284(5-6):629--638, 2011.

\bibitem{MR3165234}
K.~Bogdan, T.~Grzywny, and M.~Ryznar.
\newblock Density and tails of unimodal convolution semigroups.
\newblock {\em J. Funct. Anal.}, 266(6):3543--3571, 2014.

\bibitem{MR2457489}
K.~Bogdan, W.~Hansen, and T.~Jakubowski.
\newblock Time-dependent {S}chr\"odinger perturbations of transition densities.
\newblock {\em Studia Math.}, 189(3):235--254, 2008.

\bibitem{MR2524930}
Z.-Q. Chen, P.~Kim, and T.~Kumagai.
\newblock On heat kernel estimates and parabolic {H}arnack inequality for jump
  processes on metric measure spaces.
\newblock {\em Acta Math. Sin. (Engl. Ser.)}, 25(7):1067--1086, 2009.

\bibitem{MR2806700}
Z.-Q. Chen, P.~Kim, and T.~Kumagai.
\newblock Global heat kernel estimates for symmetric jump processes.
\newblock {\em Trans. Amer. Math. Soc.}, 363(9):5021--5055, 2011.

\bibitem{MR2357678}
Z.-Q. Chen and T.~Kumagai.
\newblock Heat kernel estimates for jump processes of mixed types on metric
  measure spaces.
\newblock {\em Probab. Theory Related Fields}, 140(1-2):277--317, 2008.

\bibitem{Dyda12}
B.~Dyda.
\newblock Fractional calculus for power functions and eigenvalues of the
  fractional {L}aplacian.
\newblock {\em Fract. Calc. Appl. Anal.}, 15(4):536--555, 2012.

\bibitem{DydaVahakangas}
B.~{Dyda} and A.~V. {V\"ah\"akangas}.
\newblock {A framework for fractional Hardy inequalities.}
\newblock {\em {Ann. Acad. Sci. Fenn., Math.}}, 39(2):675--689, 2014.

\bibitem{MR1918494}
S.~Filippas and A.~Tertikas.
\newblock Optimizing improved {H}ardy inequalities.
\newblock {\em J. Funct. Anal.}, 192(1):186--233, 2002.

\bibitem{MR1786080}
P.~J. Fitzsimmons.
\newblock Hardy's inequality for {D}irichlet forms.
\newblock {\em J. Math. Anal. Appl.}, 250(2):548--560, 2000.

\bibitem{MR2425175}
R.~L. Frank, E.~H.~Lieb, and R.~Seiringer.
\newblock Hardy-{L}ieb-{T}hirring inequalities for fractional {S}chr\"odinger operators
\newblock {\em J. Amer. Math. Soc.}, 21(4):925--950, 2008.

\bibitem{MR2469027}
R.~L. Frank and R.~Seiringer.
\newblock Non-linear ground state representations and sharp {H}ardy
  inequalities.
\newblock {\em J. Funct. Anal.}, 255(12):3407--3430, 2008.

\bibitem{MR2778606}
M.~Fukushima, Y.~Oshima, and M.~Takeda.
\newblock {\em Dirichlet forms and symmetric {M}arkov processes}, volume~19 of
  {\em de Gruyter Studies in Mathematics}.
\newblock Walter de Gruyter \& Co., Berlin, extended edition, 2011.

\bibitem{MR0436854}
I.~W. Herbst.
\newblock Spectral theory of the operator
  {$(p\sp{2}+m\sp{2})\sp{1/2}-Ze\sp{2}/r$}.
\newblock {\em Comm. Math. Phys.}, 53(3):285--294, 1977.

\bibitem{MR0423094}
E.~Hille and R.~S. Phillips.
\newblock {\em Functional analysis and semi-groups}.
\newblock American Mathematical Society, Providence, R. I., 1974.
\newblock Third printing of the revised edition of 1957, American Mathematical
  Society Colloquium Publications, Vol. XXXI.

\bibitem{2014TJ}
T.~Jakubowski.
\newblock Fundamental solution of the fractional diffusion equation with a
  singular drift.
\newblock preprint, 2014.

\bibitem{MR2852869}
A.~Ka{\l}amajska and K.~Pietruska-Pa{\l}uba.
\newblock On a variant of the {G}agliardo-{N}irenberg inequality deduced from
  the {H}ardy inequality.
\newblock {\em Bull. Pol. Acad. Sci. Math.}, 59(2):133--149, 2011.

\bibitem{MR3131293}
P.~Kim, R.~Song, and Z.~Vondra{\v{c}}ek.
\newblock Global uniform boundary {H}arnack principle with explicit decay rate
  and its application.
\newblock {\em Stochastic Process. Appl.}, 124(1):235--267, 2014.

\bibitem{MR1892228}
P.~D. Lax.
\newblock {\em Functional analysis}.
\newblock Pure and Applied Mathematics (New York). Wiley-Interscience [John
  Wiley \& Sons], New York, 2002.

\bibitem{MR2777530}
V.~Maz'ya.
\newblock {\em Sobolev spaces with applications to elliptic partial
  differential equations}, volume 342 of {\em Grundlehren der Mathematischen
  Wissenschaften [Fundamental Principles of Mathematical Sciences]}.
\newblock Springer, Heidelberg, augmented edition, 2011.

\bibitem{MR3020137}
D.~Pilarczyk.
\newblock Self-similar asymptotics of solutions to heat equation with inverse
  square potential.
\newblock {\em J. Evol. Equ.}, 13(1):69--87, 2013.

\bibitem{MR2978140}
R.~L. Schilling, R.~Song, and Z.~Vondra{\v{c}}ek.
\newblock {\em Bernstein functions}, volume~37 of {\em de Gruyter Studies in
  Mathematics}.
\newblock Walter de Gruyter \& Co., Berlin, second edition, 2012.
\newblock Theory and applications.

\bibitem{MR3117146}
I.~Skrzypczak.
\newblock Hardy-type inequalities derived from {$p$}-harmonic problems.
\newblock {\em Nonlinear Anal.}, 93:30--50, 2013.

\bibitem{MR1760280}
J.~L. Vazquez and E.~Zuazua.
\newblock The {H}ardy inequality and the asymptotic behaviour of the heat
  equation with an inverse-square potential.
\newblock {\em J. Funct. Anal.}, 173(1):103--153, 2000.

\bibitem{MR2555291}
M.~Z{\"a}hle.
\newblock Potential spaces and traces of {L}\'evy processes on {$h$}-sets.
\newblock {\em Izv. Nats. Akad. Nauk Armenii Mat.}, 44(2):67--100, 2009.

\end{thebibliography}

\end{document}